\DeclareMathAlphabet{\eufrak}{U}{}{}{}  % Euler fraktur math
\SetMathAlphabet\eufrak{normal}{U}{euf}{m}{n}
\SetMathAlphabet\eufrak{bold}{U}{euf}{b}{n}
\numberwithin{equation}{section}
\def\real{{\mathord{{\rm I\kern-2.8pt R}}}}        % Fake blackboard bold R.
\def\inte{{\mathord{{\rm I\kern-2.8pt N}}}}
\def\PP{{\mathord{{\rm I\kern-2.8pt P}}}}
\def\real{{\mathord{\mathbb R}}}
\def\inte{{\mathord{\mathbb N}}}
\def\H{{\mathord{\mathbb H}}}
\def\R{\right}
\def\L{\left}
\def\P{\mathbb{P}}
\def\E{\mathbb{E}}
\def\Q{\mathbb{Q}}
\def\L{\left}
\def\R{\right}
\def\H{\mathcal{H}}
\newtheorem{prop}{Proposition}[section]
\newtheorem{lemma}[prop]{Lemma}
\newtheorem{corollary}[prop]{Corollary}
\newtheorem{theorem}[prop]{Theorem}
\newtheorem{remark}[prop]{Remark}
\def\argmax{{\mathrm{{\rm argmax}}}}
\def\cY{\mathcal{Y}}
\def\cF{\mathcal{F}}
\def\IP{\mathbb{P}}
\def\cH{\mathcal{H}}
\def\cO{\mathcal{O}}
\def\cE{\mathcal{E}}
\def\cS{\mathbb{S}}
\def\N{\mathbb{N}}
\def\IR{\mathbb{R}}
\title{Forward-backward systems for expected utility maximization}
\author{
Ulrich Horst\footnote{Institut f\"ur Mathematik, Humboldt-Universit\"at zu Berlin, Unter den Linden 6, 10099 Berlin, Germany, \texttt{horst@mathematik.hu-berlin.de}}, Ying Hu\footnote{Universit\'e de Rennes 1, campus Beaulieu, 35042 Rennes cedex, France, \texttt{ying.hu@univ-rennes1.fr}}, Peter Imkeller\footnote{Institut f\"ur Mathematik, Humboldt-Universit\"at zu Berlin, Unter den Linden 6, 10099 Berlin, Germany, \texttt{imkeller@mathematik.hu-berlin.de}}, Anthony R\'eveillac\footnote{CEREMADE UMR CNRS 7534, Universit\'e Paris Dauphine, Place du Mar\'echal De Lattre De Tassigny, 75775 PARIS CEDEX 16, France, \texttt{anthony.reveillac@cremade.dauphine.fr}} and Jianing Zhang\footnote{Weierstrass Institute for Applied Analysis and Stochastics, Mohrenstr. 39, 10117 Berlin, Germany, \texttt{jianing.zhang@wias-berlin.de}}  \vspace{0.3cm}
}
\begin{document}
\maketitle

\begin{abstract}
In this paper we deal with the utility maximization problem with a general utility function. We derive a new approach in which we reduce the utility maximization problem with general utility to the study of a fully-coupled Forward-Backward Stochastic Differential Equation (FBSDE).
\end{abstract}

\mbox{ }

\noindent \textbf{AMS Subject Classification}: Primary 60H10, 93E20

\noindent \textbf{JEL Classification}: C61, D52, D53

\section{Introduction}

One of the most commonly studied topic in mathematical finance (and applied probably) is the problem of maximizing expected terminal utility from trading in a financial market. In such a situation, the \textit{stochastic control problem} is of the form
\begin{equation}
\label{eq:introopti}
V(0,x):=\sup_{\pi \in \mathcal{A}} \E[U(X_T^\pi+H)]
\end{equation}
for a real-valued function $U$, where $\mathcal{A}$ denotes the set of {\sl admissible trading strategies},  $T < \infty$ is the terminal time, $X_T^{\pi}$ is the wealth of the agent when he follows the strategy $\pi \in \mathcal{A} $ and his initial capital at the initial time zero is $x>0$, and $H$ is a liability that the agent must deliver at the terminal time. One is typically interested in establishing existence and uniqueness of optimal solutions and in characterizing optimal strategies and the \textit{value function} $V(t,x)$ which is defined as
$$ V(t,x):=\sup_{\pi \in \mathcal{A}} \E[U(X_{t,T}^\pi+H)\vert \mathcal{F}_t].$$
Here $X_{t,T}$ denotes the wealth of the agent when the investment period is $[t,T]$ and where the filtration $(\mathcal{F}_t)_{t\in [0,T]}$ defines the flow of information.
\\[5pt]
The question of existence of an optimal strategy $\pi^\ast$ can essentially be addressed using \textit{convex duality}. The convex duality approach is originally due to Bismut \cite{Bismut} with its  modern form dating back to Kramkov and Schachermayer \cite{KramkovSchachermayer}. For instance, given some growth condition on $U$ or related quantities (such as the asymptotic elasticity condition for utilities defined on the half line) existence of an optimal strategy is guaranteed under mild regularity conditions on the liability and convexity assumptions on the set of admissible trading strategies (see e.g. \cite{Biagini} for details). However, the duality method is not constructive and does not allow for a characterization of optimal strategies and value functions.
\\[5pt]
One approach to simultaneously characterize optimal trading strategies and utilities uses the theory of forward-backward stochastic differential equations (FBSDE). When the filtration is generated by a standard Wiener process $W$ and if either $U(x):=-\exp(-\alpha x)$ for some $\alpha>0$ and $H \in L^2$, or $U(x):=\frac{x^\gamma}{\gamma}$ for $\gamma \in (0,1)$ or $U(x) = \ln x$ and $H = 0$, it has been shown by Hu, Imkeller and M\"uller \cite{HuImkellerMueller} that the control problem \eqref{eq:introopti} can essentially be reduced to solving a BSDE of the form
\begin{equation}
\label{eq:introBSDE}
Y_t =H-\int_t^T Z_s dW_s-\int_t^T f(s,Z_s) ds, \quad t\in [0,T],
\end{equation}
where the {\sl driver} $f(t,z)$ is a predictable process of quadratic growth in the $z$-variable. Their results have since been extended beyond the Brownian framework and to more general utility optimization problems with complete and incomplete information in, e.g., \cite{HorstPirvuReis}, \cite{MochaWestray}, \cite{Morlais}, \cite{Nutz} and \cite{ManiaSantacroce}. The method used in \cite{HuImkellerMueller} and essentially all other papers relies on the martingale optimality principle and can essentially only be applied to the standard cases mentioned above (exponential with general endowment and power, respectively logarithmic, with zero endowment). This is due to a particular ``separation of variables'' property enjoyed by the classical utility functions: their value function can be decomposed as $V(t,x)=g(x) V_t$ where $g$ is a deterministic function and $V$ is an adapted process. As a result, optimal future trading strategies are independent of  current wealth levels.
\\[5pt]
More generally, there has recently been an increasing interest in dynamic {\sl translation invariant utility functions}. A utility function is called translation invariant if a cash amount added to a financial position increases the utility by that amount and hence optimal trading strategies are wealth-independent\footnote{It has been shown by \cite{DelbaenPengRosazzaGianin} that essentially all such utility functions can be represented in terms of a BSDE of the form \ref{eq:introBSDE}.}. Although the property of translation invariance renders the utility optimization problem mathematically tractable, independence of the trading strategies on wealth is rather unsatisfactory from an economic point of view. In \cite{ManiaTevzadze} the authors derive a verification theorem for optimal trading strategies for more general utility functions when $H=0$. More precisely, given a general utility function $U$ and assuming that there exists an optimal strategy regular enough such that the value function enjoys some regularity properties in $(t,x)$, it is shown that there exists a predictable random field $(\varphi(t,x))_{(t,x)\in [0,T]\times (0,\infty)}$ such that the pair $(V,\varphi)$ is solution to the following backward stochastic partial differential equation (BSPDE) of the form:
\begin{equation}
\label{eq:introBSPDE}
V(t,x) =U(x)-\int_t^T \varphi(s,x) dW_s-\int_t^T \frac{|\varphi_x(s,x)|^2}{V_{xx}(s,x)} ds, \quad t\in [0,T]
\end{equation}
where $\varphi_x$ denotes the partial derivative of $\varphi$ with respect to $x$ and $V_{xx}$ the second partial derivative of $V$ with respect to the same variable. The optimal strategy $\pi^\ast$ can then be obtained from $(V,\varphi)$. Unfortunately, the BSPDE-theory is still in its infancy and to the best of our knowledge the non-linearities arising in \eqref{eq:introBSPDE} cannot be handled except in the classical cases mentioned above where once again one benefits of the ``separation of variables'' (see \cite{ImkellerReveillacZhang}). Moreover, the utility function $U$ only appears in the terminal condition which is not very handy. In that sense this is exactly the same situation as the Hamilton-Jacobi-Bellman equation where $U$ only appears as a terminal condition but not in the equation itself.
\\[5pt]
In this paper we propose a new approach to solving the optimization problem \eqref{eq:introopti} for a larger class of utility function and characterize the optimal strategy $\pi^\ast$ in terms of a fully-coupled FBSDE-system. The optimal strategy is then a function of the current wealth and of the solution to the backward component of the system. In addition, the driver of the backward part is given in terms of the utility function and its derivatives. This adds enough structure to the optimization problem to deal with fairly general utilities functions, at least when the market is complete. We also derive the FBSDE system for the power case with general (non-hedgeable) liabilities; to the best of our knowledge we are the first to characterize optimal strategies for power utilities with general liabilities. Finally, we link our approach to the well established approaches using convex dual theory and stochastic maximum principles.
\\[5pt]
The remainder of this paper is organized as follows. In Section \ref{section:prelim} we introduce our financial market model. In Section \ref{section:realline} we first derive a verification theorem in terms of a FBSDE for utilities defined on the real line along with a converse result, that is, we show that a solution to the FBSDE allows to construct the optimal strategy. Section \ref{section:halfline} is devoted to the same question but for utilities defined on the positive half line. In Section \ref{section:link} we relate our approach to the stochastic maximum principle obtained by Peng \cite{Peng} and the standard duality approach. We use the duality-BSDE link to show that the FBSDE associated with the problem of maximizing power utility with general positive endowment has a solution.

\section{Preliminaries}
\label{section:prelim}

We consider a financial market which consists of one bond $S^0$ with interest rate zero and of $d\geq 1$ stocks given by
$$ d\tilde{S}_t^i:=\tilde{S}_t^i dW^i_t + \tilde{S}_t^i \theta_t^i dt, \quad i \in \{1,\ldots,d\} $$
where $W$ is a standard Brownian motion on $\real^d$ defined on a filtered probability space $(\Omega,\mathcal{F},(\mathcal{F}_t)_{t\in [0,T]},\P)$, $(\mathcal{F}_t)_{t\in [0,T]}$ is the filtration generated by $W$, and $\theta:=(\theta^1,\ldots,\theta^d)$ is a predictable bounded process with values in $\real^d$. Since we assume the process $\theta$ to be bounded, Girsanov's theorem implies that the set of equivalent local martingale measures (i.e. probability measures under which $\tilde{S}$ is a local martingale) is not empty, and thus according to the classical literature (see e.g. \cite{DelbaenSchachermayer}), arbitrage opportunities are excluded in our model. 
For simplicity throughout we write
$$ dS_t^i:=\frac{d\tilde{S}_t^i}{\tilde{S}_t^i}. $$
We denote by $\alpha \cdot \beta$ the inner product in $\real^d$ of vectors $\alpha$ and $\beta$ and by $|\cdot|$ the usual associated $L^2$-norm on $\real^d$. In all the paper $C$ will denote a generic constant which can differ from line to line. We also define the following spaces:
$$ \mathbb{S}^2(\real^d):=\left\{\beta:\Omega \times [0,T] \to \real^d, \; \textrm{ predictable}, \; \E[\sup_{t\in [0,T]} |\beta_t|^2]<\infty \right\}, $$
$$ \mathbb{H}^2(\real^d):=\left\{\beta:\Omega \times [0,T] \to \real^d, \; \textrm{ predictable}, \; \E\left[\int_0^T |\beta_t|^2 dt\right]<\infty \right\}.$$
\\\\
\noindent
Since the market price of risk $\theta$ is assumed to be bounded, the stochastic process
$$ \mathcal{E}(-\theta \cdot W)_t:=\exp\left(-\int_0^t \theta_s dW_s - \frac12 \int_0^t |\theta_s|^2 ds \right) $$
has finite moments of order $p$ for any $p>0$.
We assume $d_1+d_2=d$ and that the agent can invest in the assets $\tilde{S}^1,\ldots,\tilde{S}^{d_1}$ while the stocks $\tilde{S}^{d_1+1},\ldots,\tilde{S}^{d_2}$ cannot be invested into. Denote $S^{\mathcal{H}}:=(S^1,\ldots,S^{d_1},0\ldots,0)$, $W^{\mathcal{H}}:=(W^1,\ldots,W^{d_1},0\ldots,0)$, $W^{\mathcal{O}}:=(0,\ldots,0,W^{d_1+1},\ldots,W^{d_2}),$ and $\theta^{\mathcal{H}}:=(\theta^1,\ldots,\theta^{d_1},0\ldots,0)$ (the notation $\cH$ refers to ``hedgeable'' and $\cO$ to ``orthogonal''). We define the set $\Pi^x$ of admissible strategies with initial capital $x>0$ as\begin{equation}
\label{eq:admissible}
\Pi^x:=\L\{ \pi:\Omega \times [0,T] \to \real^{d_1}, \; \E\left[ \int_0^T |\pi_t|^2 dt \right] <\infty, \pi \textrm{ is self-financing }\R\}
\end{equation}
where for $\pi$ in $\Pi^x$ the associated wealth process $X^\pi$ is defined as
$$ X_t^\pi:=x+\int_0^t \pi_r dS_r^{\mathcal{H}} = x+ \sum_{i=1}^{d_1} \int_0^t \pi_r^i dS_r^i, \quad t \in [0,T].$$
Every $\pi$ in $\Pi^x$ is extended to an $\IR^d$-valued process by
$$\tilde{\pi}:=(\pi^1,\ldots,\pi^{d_1},0,\ldots,0).$$ In the following, we will always write $\pi$ in place of $\tilde{\pi}$, i.e. $\pi$ is an $\IR^d$-valued process where the last $d_2$ components are zero. Moreover, we consider a utility function $U:I \to \real$ where $I$ is an interval of $\real$ such that $U$ is strictly increasing and strictly concave.
We seek for a strategy $\pi^\ast$ in $\Pi^x$ satisfying $\E[U(X_T^{\pi^\ast}+H)]<\infty$ such that
\begin{equation}
\label{eq:opti}
\pi^\ast=\argmax_{\pi \in \Pi^x,\; \E[\vert U(X_T^{\pi}+H) \vert]<\infty} \L\{ \E[U(X_T^{\pi}+H)]\R\}
\end{equation}
where $H$ is a random variable in $L^2(\Omega,\mathcal{F}_T,\P)$ such that the expression above makes sense. We concretize on sufficient conditions in the subsequent sections.

\section{Utilities defined on the real line}
\label{section:realline}

In this section we consider a utility function $U:\real \to \real$ defined on the whole real line. We assume that $U$ is strictly increasing and strictly concave and that the agent is endowed with a claim $H\in L^2(\Omega,\mathcal{F}_T,\P)$. We introduce the following conditions.
\\\\\textbf{(H1)} $U:\real \to \real$ is three times differentiable\\\\
\textbf{(H2)} We say that condition (H2) holds for an element $\pi^\ast$ in $\Pi^x$, if $\E[|U'(X_T^{\pi^\ast}+H)|^2]<\infty$ and if for every bounded predictable process $h:[0,T] \to \real$, the family of random variables
$$ \left(\int_0^T h_r dS_r^{\mathcal{H}} \int_0^1 U'\left( X_T^{\pi^\ast}+H+ \varepsilon r \int_0^T h_r dS_r^{\mathcal{H}} \right) dr\right)_{\varepsilon \in (0,1)}  $$
is uniformly integrable.\\

\noindent
Before presenting the first main result of this section, we prove that condition $(H2)$ is satisfied for every strategy $\pi^\ast$ such that $\E[|U'(X_T^{\pi^\ast}+H)|]<\infty$ when one has an exponential growth condition on the marginal utility of the form:
\[
    U'(x + y) \leq C \left( 1 + U'(x) \right) \left( 1 + \exp(\alpha y) \right) \quad \mbox{for some } \alpha \in \mathbb{R}.
\]
Indeed, let
$G:=\int_0^T h_r dS_r^{\cH}$ and $d>0$. We will show that the quantity
\begin{align*}
q(d):=\sup_{\varepsilon\in (0,1)} \E\left[ \left|G \int_0^1 U'(X_T^{\pi^\ast}+H+\varepsilon r G) dr \right| \textbf{1}_{\left|G \int_0^1 U'(X_T^{\pi^\ast}+H+\varepsilon r G) dr \right|>d}\right]
\end{align*}
vanishes when $d$ goes to infinity. For simplicity we write $\delta_{\varepsilon,d}:=\textbf{1}_{\left|G \int_0^1 U'(X_T^{\pi^\ast}+H+\varepsilon r G) dr \right|>d}$. By the Cauchy-Schwarz inequality
\begin{align*}
q(d)&\leq \sup_{\varepsilon\in (0,1)} \E\left[ (1+U'(X_T^{\pi^\ast}+H)) \left|G (1 + \int_0^1  \exp(\alpha \varepsilon r G) ) dr \right| \delta_{\varepsilon,d}\right]\\
& \leq C \E\left[ |U'(X_T^{\pi^\ast}+H)|^2\right]^{1/2} \sup_{\varepsilon\in (0,1)} \E\left[ \left|G \int_0^1 \exp(\alpha \varepsilon r G) dr \right|^2 \delta_{\varepsilon,d} \right]^{1/2}.
\end{align*}
Since $\E\left[ |U'(X_T^{\pi^\ast}+H)|^2\right]$ is assumed to be finite we deduce from the inequality
\[
    \exp(\alpha \zeta x) \leq 1 + \exp(\alpha x) \quad \mbox{for all } x\in \real, \; 0<\zeta<1
\]
that
\begin{align*}
q(d)&  \leq C \sup_{\varepsilon\in (0,1)} \E\left[ \left| G (2+\exp(\alpha G) ) \right|^2 \delta_{\varepsilon,d} \right]^{1/2}.
\end{align*}
Applying successively the Cauchy-Schwarz inequality and the Markov inequality, it holds that
\begin{align*}
q(d)&  \leq C \E\left[ \left|G  (2+ \exp(\alpha G)) \right|^4\right]^{1/4} \sup_{\varepsilon\in (0,1)} \E[\delta_{\varepsilon,d}]^{1/4}\\
&  \leq C \E\left[ \left|G  (2 + \exp(\alpha G)) \right|^4\right]^{1/4} d^{-1/4} \sup_{\varepsilon\in (0,1)} \E\left[|G| \int_0^1 U'(X_T^{\pi^\ast}+H+\varepsilon r G) dr \right]^{1/4}\\
&  \leq C \E\left[ \left|G  (2+ \exp(\alpha G)) \right|^4\right]^{1/4} d^{-1/4} \;\E\left[|G (2+\exp(\alpha G))|^2 \right]^{1/8}.
\end{align*}
Let $p\geq 2$. Since $h$ and $\theta$ are bounded it is clear that $\E\left[ |G|^{2p}\right]<\infty$ and
\begin{align*}
& \E\left[ \left|G  (2+ \exp(\alpha G)) \right|^p\right]\\
&\leq \E\left[ |G|^{2p}\right]^{1/2}  \E\left[\left|2+ \exp(\alpha G)\right|^{2p}\right]^{1/2}\\
& \leq C \left(2+\E\left[\left|\exp(\alpha G)\right|^{2p}\right]\right)^{1/2}\\
&=C \left(2+\E\left[\exp\left(\int_0^T 2p \alpha h_r dW_r^\cH - \frac12 \int_0^T |2p \alpha h_r|^2 dr\right)\right. \right. \\
 & ~~~ \left. \left. \exp\left(\frac12 \int_0^T |2p \alpha h_r|^2 +2p \alpha h_r \cdot \theta_r dr\right) \right]\right)^{1/2} \\
&\leq C.
\end{align*}
Hence $\lim_{d\to \infty} q(d)=0$ which proves the assertion.

\noindent

\subsection{Characterization and verification: incomplete markets}

We are now ready to state and prove the first main result of this paper: a verification theorem for optimal trading strategies.

\begin{theorem}
\label{th:verif1}
Assume that $(H1)$ holds. Let $\pi^\ast \in \Pi^x$ be an optimal solution to the problem \eqref{eq:opti} which satisfies assumption $(H2)$. Then there exists a predictable process $Y$ with $Y_T=H$ such that $U'(X^{\pi^\ast}+Y)$ is a martingale in $L^2(\Omega,\mathcal{F}_T,\P)$ and $$ \pi_t^{\ast^i}= -\theta_t^i \frac{U'(X_t^{\pi^\ast}+Y_t)}{U''(X_t^{\pi^\ast}+Y_t)}- Z_t^i, \quad t \in [0,T], \quad i=1,\ldots,d_1$$ where $Z_t:=\frac{d\langle Y, W \rangle_t}{dt}:=\left(\frac{d\langle Y, W^i \rangle_t}{dt},\ldots,\frac{d\langle Y, W^d \rangle_t}{dt}\right)$.
\end{theorem}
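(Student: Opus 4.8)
The plan is to derive a first-order optimality condition by perturbing the optimal strategy. For a bounded predictable $\real^{d_1}$-valued process $h$ and $\varepsilon\in(0,1)$, the strategy $\pi^\ast+\varepsilon h$ lies in $\Pi^x$ and has terminal wealth $X_T^{\pi^\ast}+\varepsilon G$ with $G:=\int_0^T h_r\,dS_r^{\cH}$. From the identity
$$U(X_T^{\pi^\ast}+H+\varepsilon G)=U(X_T^{\pi^\ast}+H)+\varepsilon G\int_0^1 U'(X_T^{\pi^\ast}+H+\varepsilon r G)\,dr,$$
whose last term is integrable by $(H2)$, the perturbed strategy is admissible for \eqref{eq:opti}. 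Optimality of $\pi^\ast$ then yields, after dividing by $\varepsilon>0$, that $\E[G\int_0^1 U'(X_T^{\pi^\ast}+H+\varepsilon rG)\,dr]\le 0$; letting $\varepsilon\downarrow 0$ and using the uniform integrability in $(H2)$ to pass to the limit gives $\E[G\,U'(X_T^{\pi^\ast}+H)]\le 0$, and replacing $h$ by $-h$ gives the reverse inequality. Hence $\E[G\,U'(X_T^{\pi^\ast}+H)]=0$ for every bounded predictable $\real^{d_1}$-valued $h$.

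Next I would construct the candidate martingale. Set $M:=U'(X_T^{\pi^\ast}+H)$, which is in $L^2$ by $(H2)$, and let $N_t:=\E[M\mid\mathcal{F}_t]$, a continuous $L^2$-martingale; by the martingale representation theorem there is $\psi\in\mathbb{H}^2(\real^d)$ with $N=N_0+\int_0\psi_s\,dW_s$. Writing $G=\sum_{i=1}^{d_1}\int_0^T h_r^i(dW_r^i+\theta_r^i\,dr)$ and inserting $N_T$, the stochastic-integral part is handled by the Itô isometry and the drift part by the martingale property together with Fubini, so $\E[N_T G]=0$ becomes $\E[\int_0^T\sum_{i=1}^{d_1}h_r^i(\psi_r^i+N_r\theta_r^i)\,dr]=0$ for all bounded predictable $h$, whence
$$\psi_t^i=-N_t\,\theta_t^i,\qquad i=1,\ldots,d_1,\qquad dt\otimes d\P\text{-a.e.}$$

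Finally I would define $Y$ and read off the feedback law. Since $U$ is $C^3$ with $U''<0$, the inverse $g:=(U')^{-1}$ is $C^2$ on the (open) range of $U'$; moreover $N_t=\E[M\mid\mathcal{F}_t]$ takes values in that open range, simultaneously for all $t$, by a standard argument on continuous martingales. Put $Y_t:=g(N_t)-X_t^{\pi^\ast}$. Then $Y$ is a continuous semimartingale with $Y_T=g(M)-X_T^{\pi^\ast}=H$, and $U'(X_t^{\pi^\ast}+Y_t)=N_t$ is the asserted $L^2$-martingale. Applying Itô's formula to $g(N_t)=X_t^{\pi^\ast}+Y_t$ and subtracting the dynamics of $X^{\pi^\ast}$, the coefficient of $dW^i$ in $Y$, which equals $Z_t^i=\frac{d\langle Y,W^i\rangle_t}{dt}$, is $Z_t^i=g'(N_t)\psi_t^i-\pi_t^{\ast i}$ (with $\pi_t^{\ast i}:=0$ for $i>d_1$). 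For $i\le d_1$, using $\psi_t^i=-N_t\theta_t^i$ and the identity $g'(U'(x))=1/U''(x)$ at $x=X_t^{\pi^\ast}+Y_t$,
$$\pi_t^{\ast i}=-g'(N_t)N_t\,\theta_t^i-Z_t^i=-\theta_t^i\,\frac{U'(X_t^{\pi^\ast}+Y_t)}{U''(X_t^{\pi^\ast}+Y_t)}-Z_t^i,$$
which is the claimed formula.

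I expect the main obstacle to be the rigorous execution of the variational step: showing that $\pi^\ast+\varepsilon h$ is a legitimate competitor and, above all, interchanging the limit $\varepsilon\downarrow0$ with the expectation. This is exactly what assumption $(H2)$ is designed for — the displayed uniformly integrable family there is precisely the difference quotient arising in this computation — and the preliminary lemma in the text furnishes a concrete sufficient condition (exponential growth of $U'$) under which $(H2)$ holds. A secondary, minor point is verifying that $N$ does not leave the open range of $U'$, so that $Y=g(N)-X^{\pi^\ast}$ is well defined.
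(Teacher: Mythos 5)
Your proposal is correct and follows the same skeleton as the paper's proof: (i) the variational step giving $\E[G\,U'(X_T^{\pi^\ast}+H)]=0$ via $(H2)$ and dominated convergence, and (ii) the construction $Y_t=(U')^{-1}(\E[U'(X_T^{\pi^\ast}+H)\mid\mathcal F_t])-X_t^{\pi^\ast}$, which is exactly the paper's $\alpha_t$. The one place where you genuinely diverge is in converting the integral identity into the pointwise feedback law. The paper first derives the BSDE for $Y$ (with driver \eqref{eq:driver}), then applies integration by parts to $U'(X^{\pi^\ast}+Y)\cdot\int_0^{\cdot}h\,dS^{\cH}$ and must verify, via BDG and Young's inequality, that the two resulting local-martingale terms are true martingales before taking expectations. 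You instead stay at the level of the martingale representation $N=N_0+\int\psi\,dW$ and extract $\psi^i=-N\theta^i$ for $i\le d_1$ directly from $\E[N_TG]=0$ using the It\^o isometry for the $dW$-part and conditioning plus Fubini for the drift part; the feedback law then follows from It\^o applied to $(U')^{-1}(N)$. Since the paper's $\beta$ satisfies $\beta=U''(X^{\pi^\ast}+Y)(\pi^\ast+Z)$, your identity $\psi^i=-N\theta^i$ is literally the same equation as the paper's $U'\theta^i+U''(\pi^{\ast i}+Z^i)=0$, so the two computations are equivalent; your version buys a shorter justification (no BDG estimates on the stochastic integrals), at the cost of not exhibiting the BSDE for $Y$, which the paper needs anyway for Theorem \ref{th:verif2}. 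Your two side remarks (that $N$ stays in the open range of $U'$, and that the continuity of $Y$ gives predictability) are points the paper passes over silently, and your treatment of them is adequate.
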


\begin{proof}
We first prove the existence of $Y$. Since $\E[|U'(X_T^{\pi^\ast}+H)|^2]<\infty$, the stochastic process $\alpha$ defined as $\alpha_t:=\E[U'(X_T^{\pi^\ast}+H)\vert \mathcal{F}_t]$, for $t$ in $[0,T]$ is a square integrable martingale. Define $Y_t:=(U')^{-1}(\alpha_t)-X^{\pi^\ast}_t$. Then $Y$ is $(\mathcal{F}_t)_{t\in [0,T]}$-predictable. Now It\^o's formula yields
\begin{align}
\label{eq:Y1}
Y_t+X_t^{\pi^\ast}&=Y_T+X_T^{\pi^\ast}-\int_t^T \frac{1}{U''(U'^{-1}(\alpha_s))} d\alpha_s + \frac12 \int_t^T \frac{U^{(3)}(U'^{-1}(\alpha_s))}{(U''(U'^{-1}(\alpha_s)))^3} d \langle \alpha, \alpha \rangle_s.
\end{align}
By definition, $\alpha$ is the unique solution of the zero driver BSDE
\begin{equation}
\label{eq:alphabsde}
\alpha_t=U'(X^{\pi^\ast}_T+Y_T) -\int_t^T \beta_s dW_s, \quad t \in [0,T],
\end{equation}
where $\beta$ is a square integrable predictable process with valued in $\real^d$.
Plugging \eqref{eq:alphabsde} into \eqref{eq:Y1} yields
\begin{align*}
Y_t+X_t^{\pi^\ast}=&X_T^{\pi^\ast}+ H - \int_t^T \frac{1}{U''(X_s^{\pi^\ast}+Y_s))} \beta_s dW_s + \frac12 \int_t^T \frac{U^{(3)}(X_s^{\pi^\ast}+Y_s)}{(U''(X_s^{\pi^\ast}+Y_s))^3} |\beta_s|^2 ds.
\end{align*}
Setting $\tilde{Z}:=\frac{1}{U''(X^{\pi^\ast}+Y))} \beta$, we have
\begin{align*}
Y_t+X_t^{\pi^\ast}=&X_T^{\pi^\ast}+H - \int_t^T \tilde{Z}_s dW_s + \frac12 \int_t^T \frac{U^{(3)}}{U''}(X_s^{\pi^\ast}+Y_s) |\tilde{Z}_s|^2 ds.
\end{align*}
Now by putting $Z^i:=\tilde{Z}^i-\pi{^\ast}^i, \; i=1,\ldots,d$, we have shown that $Y$ is a solution to the BSDE
\begin{equation}
\label{eq:BSDE1}
Y_t=H-\int_t^T Z_s d W_s -\int_t^T f(s,X_s^{\pi^\ast},Y_s,Z_s) ds, \quad t \in [0,T],
\end{equation}
where $f$ is given by
\begin{equation}
\label{eq:driver}
f(s,X_s^{\pi^\ast},Y_s,Z_s):= - \frac12 \frac{U^{(3)}}{U''}(X_s^{\pi^\ast}+Y_s) |\pi_s^\ast+Z_s|^2 - \pi_s^\ast \cdot \theta_s.
\end{equation}
Finally, by construction we have $U'(X_t^{\pi^\ast}+Y_t)=\alpha_t$, thus it is a martingale.
\\\\
\noindent
Now we deal with the characterization of the optimal strategy. To this end, let $h:[0,T] \to \real^{d_1}$ be a bounded predictable process. We extend $h$ into $\real^d$ by setting $\tilde{h}:=(h^1,\ldots,h^{d_1},0,\ldots,0)$ and use the convention that $\tilde{h}$ is again denoted by $h$. Thus for every $\varepsilon$ in $(0,1)$ the perturbed strategy $\pi^\ast + \varepsilon h$ belongs to $\Pi^x$. Since $\pi^\ast$ is optimal it is clear that for every such $h$
it holds that
\begin{equation}
\label{eq:direct1}
l(h):=\lim_{\varepsilon \to 0} \frac{1}{\varepsilon} \E\L[ U(x+\int_0^T (\pi_r^\ast+ \varepsilon h_r) dS_r^{\mathcal{H}}+Y_T)-U(x+\int_0^T \pi_r^\ast dS_r^{\mathcal{H}} +Y_T) \R]\leq 0.
\end{equation}
Moreover we have
\begin{align*}
&\frac{1}{\varepsilon} \left(U(x+\int_0^T (\pi_r^\ast+ \varepsilon h_r) dS_r^{\mathcal{H}}+Y_T)-U(x+\int_0^T \pi_r^\ast dS_r^{\mathcal{H}}+Y_T) \right)\\
&=\int_0^T h_r dS_r^{\mathcal{H}}  \int_0^1 U'\left(X_T^{\pi^\ast}+Y_T+\theta \varepsilon \int_0^T h_r dS_r^{\mathcal{H}} \right) d\theta.
\end{align*}
Now using (H2), Lebesgue's dominated convergence theorem implies that \eqref{eq:direct1} can be rewritten as
\begin{equation}
\label{eq:direct2}
\E\L[ U'(X_T^{\pi^\ast}+Y_T) \int_0^T h_r dS_r^{\mathcal{H}} \R]\leq 0
\end{equation}
for every bounded predictable process $h$.
Applying integration by parts to $U'(X_s^{\pi^\ast} + Y_s)_{s\in[0,T]}$ and $\L(\int_0^s h_r dS_r^\cH \R)_{s\in[0,T]}$, we get
\begin{align*}
&U'(X_T^{\pi^\ast}+Y_T) \int_0^T h_r dS_r^{\mathcal{H}} \nonumber\\
&= U'(x+Y_0) \times 0 + \int_0^T U'(X_s^{\pi^\ast}+Y_s) h_s dS_s^{\mathcal{H}} \nonumber\\
&\quad + \int_0^T \int_0^s h_r dS_r^{\mathcal{H}} \; U''(X_s^{\pi^\ast}+Y_s) \L[ (\pi_s^\ast + Z_s) dW_s^{\mathcal{H}} + (\pi_s^\ast \cdot \theta_s + f(s,X_s^{\pi^\ast},Y_s,Z_s))ds\R]  \nonumber\\
&\quad + \frac12 \int_0^T \int_0^s h_r dS_r^{\mathcal{H}} \; U^{(3)}(X_s^{\pi^\ast}+Y_s) |\pi_s^\ast + Z_s|^2 ds \nonumber\\
&\quad + \int_0^T U''(X_s^{\pi^\ast}+Y_s) h_s \cdot (\pi_s^\ast + Z_s) ds.
\end{align*}
By definition of the driver $f$, the previous expression reduces to
\begin{align}
\label{eq:coml(h)}
&U'(X_T^{\pi^\ast}+Y_T) \int_0^T h_r dS_r^\cH \nonumber\\
&=\int_0^T \left( U'(X_s^{\pi^\ast}+Y_s) \theta_s + U''(X_s^{\pi^\ast}+Y_s) (\pi_s^\ast+Z_s)\right) \cdot h_s ds\nonumber\\
&\quad + \int_0^T \int_0^s h_r dS_r^{\mathcal{H}} \; U''(X_s^{\pi^\ast}+Y_s) (\pi_s^\ast + Z_s) dW_s^{\mathcal{H}} + \int_0^T U'(X_s^{\pi^\ast}+Y_s) h_s dW_s^{\mathcal{H}}.
\end{align}
The next step would be to apply the conditional expectations in \eqref{eq:coml(h)}, however the two terms on the second line of the right hand side are a priori only local martingales.
We start by showing that the first one is a uniformly integrable martingale. Indeed, from the computations which have led to \eqref{eq:BSDE1} we have that
$$ U''(X^{\pi^\ast}+Y) (\pi^\ast+Z)=\beta, $$
where we recall that $\beta$ is the square integrable process appearing in \eqref{eq:alphabsde}.
Using the BDG inequality we get
\begin{align*}
&\E\left[ \sup_{s\in [0,T]} \left\vert \int_0^s \int_0^r h_u dS_u^{\mathcal{H}} \; U''(X_r^{\pi^\ast}+Y_r) (\pi_r^\ast + Z_r) dW_r^{\mathcal{H}} \right\vert\right]\\
&\leq C \E\left[ \left\vert \int_0^T \left\vert \int_0^s h_r dS_r^{\mathcal{H}}\right\vert^2 |\beta_s|^2 ds \right\vert^{1/2}\right]\\
&\leq C \E\left[ \left(\sup_{s\in [0,T]} \left\vert \int_0^s h_r dS_r^{\mathcal{H}}\right\vert^2\right)^{1/2} \left(\int_0^T |\beta_s|^2 ds\right)^{1/2} \right].
\end{align*}
Young's inequality furthermore yields
\begin{align*}
&\E\left[ \left(\sup_{s\in [0,T]} \left\vert \int_0^s h_r dS_r^{\mathcal{H}}\right\vert^2\right)^{1/2} \left(\int_0^T |\beta_s|^2 ds\right)^{1/2} \right]\\
&\leq C \E\left[ \sup_{s\in [0,T]} \left\vert \int_0^s h_r dS_r^{\mathcal{H}}\right\vert^2\right] + C \E\left[\int_0^T |\beta_s|^2 ds \right]\\
&\leq C \left(1+\E\left[ \sup_{s\in [0,T]} \left\vert \int_0^s h_r dW_r^{\mathcal{H}}\right\vert^2\right] \right)
\end{align*}
where we have used that $h$ and $\theta$ are bounded. Applying once again the BDG inequality, we obtain
$$ \E\left[ \sup_{s\in [0,T]} \left\vert \int_0^s h_r dW_r^{\mathcal{H}}\right\vert^2\right] \leq 4 \E\left[ \int_0^T |h_r|^2 dr\right] <\infty.$$Putting together the previous steps, we have that
$$ \E\left[\sup_{s\in [0,T]} \left\vert \int_0^s \int_0^r h_u dS_u^{\mathcal{H}} \; U''(X_r^{\pi^\ast}+Y_r) (\pi_r^\ast + Z_r) dW_r^{\mathcal{H}} \right\vert\right]<\infty, $$
thus
we get
$$\E\left[ \int_0^T \int_0^s h_r dS_r^{\mathcal{H}} \; U''(X_s^{\pi^\ast}+Y_s) (\pi_s^\ast + Z_s) dW_s^{\mathcal{H}} \right]=0.$$

Note that $\left(\int_0^t U'(X_s^{\pi^\ast}+Y_s) h_s dW_s^\cH\right)_{t\in [0,T]}$ is a square integrable martingale. Indeed $U'(X^{\pi^\ast}+Y)=\alpha$ is a square integrable martingale and thus
$$ \E\left[ \int_0^T \left\vert U'(X_s^{\pi^\ast}+Y_s) h_s \right\vert^2 ds \right]<\infty.$$
Similarly,
$$ \E\left[\left\vert U'(X_T^{\pi^\ast}+Y_T) \int_t^T h_r dS_r^\cH \right\vert\right]<\infty. $$
Taking expectation in \eqref{eq:coml(h)} we obtain for every $n \geq 1$ that
\begin{align}
\label{eq:eq:coml(h)bis}
&\E\left[U'(X_T^{\pi^\ast}+Y_T) \int_0^T h_r dS_r^{\mathcal{H}} \right]\nonumber\\
&= \E\left[ \int_0^T \left( U'(X_s^{\pi^\ast}+Y_s) \theta_s + U''(X_s^{\pi^\ast}+Y_s) (\pi_s^\ast+Z_s) \right) \cdot h_s ds \right],
\end{align}
which in conjunction with \eqref{eq:direct2} leads to
\begin{equation*}
\E\left[ \int_0^T \left( U'(X_s^{\pi^\ast}+Y_s) \theta_s + U''(X_s^{\pi^\ast}+Y_s) (\pi_s^\ast+Z_s)\right) \cdot h_s ds \right]\leq 0
\end{equation*}
for every bounded predictable process $h$. Replacing $h$ by $-h$, we get
\begin{equation}
\label{eq:direct3}
\E\left[ \int_0^T \left( U'(X_s^{\pi^\ast}+Y_s) \theta_s + U''(X_s^{\pi^\ast}+Y_s) (\pi_s^\ast+Z_s)\right) \cdot h_s ds \right] = 0.
\end{equation}
Now fix $i$ in $\{1,\ldots,d_1\}$. Let $A_s^i:=U'(X_s^{\pi^\ast}+Y_s) \theta_s + U''(X_s^{\pi^\ast}+Y_s) (\pi_s^{\ast^i}+Z_s^i)$ and $h_s:=(0,\ldots,0,\textbf{1}_{A_s^i>0},0,\ldots,0)$ where the non-vanishing component is the $i$-th component.
From \eqref{eq:direct3} we get that
\begin{align*}
\E\L[\int_0^T \textbf{1}_{A_s^i>0} [U'(X_s^{\pi^\ast}+Y_s) \theta_s^i + U''(X_s^{\pi^\ast}+Y_s) (\pi_s^{\ast^i} + Z_s^i)] ds \R] = 0.
\end{align*}
Hence, $A^i\leq 0$, $d\P \otimes dt-a.e.$. Similarly by choosing $h_s=(0,\ldots,0,\textbf{1}_{A_s^i<0},0,\ldots,0)$ we deduce that
$$U'(X^{\pi^\ast}+Y) \theta^i + U''(X^{\pi^\ast}+Y_t) (\pi_t^{\ast^i} + Z_t^i) = 0, \quad d\P\otimes dt-a.e.$$ This concludes the proof since $i \in \{1,\ldots,d_1\}$ is arbitrary.
\end{proof}

The verification theorem above can also be expressed in terms of a fully-coupled Forward-Backward system.

\begin{theorem}
\label{th:verif2}
Under the assumptions of Theorem \ref{th:verif1}, the optimal strategy $\pi^\ast$ for \eqref{eq:opti} is given by
$$ \pi_t^{\ast^i}= -\theta_t^i \frac{U'(X_t+Y_t)}{U''(X_t+Y_t)}- Z_t^i, \quad t \in [0,T], \quad i=1,\ldots,d_1, $$
where $(X,Y,Z) \in \real\times\real\times\real^d$ is a triple of adapted processes which solves the FBSDE
\begin{equation}
\label{eq:fbsde1}
\left\lbrace
\begin{array}{lll}
X_t & = & x-\int_0^t \left( \theta_s \frac{U'(X_s+Y_s)}{U''(X_s+Y_s)} + Z_s\right) dW_s^{\mathcal{H}} -\int_0^t \left( \theta_s \frac{U'(X_s+Y_s)}{U''(X_s+Y_s)} + Z_s\right) \cdot \theta_s^{\mathcal{H}} ds\\
\\
Y_t & = & H-\int_t^T Z_s dW_s -\int_t^T \left[ -\frac12 |\theta_s^{\mathcal{H}}|^2 \frac{U^{(3)}(X_s+Y_s) |U^{'}(X_s+Y_s)|^2}{(U''(X_s+Y_s))^3} \right. \\
& & \left. + |\theta_s^{\mathcal{H}}|^2 \frac{U^{'}(X_s+Y_s)}{U''(X_s+Y_s)}
+ Z_s \cdot \theta_s^{\mathcal{H}}-\frac12 |Z_s^\cO|^2 \frac{U^{(3)}}{U''}(X_s+Y_s) \right]ds,
\end{array}
\right.
\end{equation}
with the notation $Z=(\underbrace{Z^1,\ldots,Z^{d_1}}_{=:Z^\cH},\underbrace{Z^{d_1+1},\ldots,Z^d}_{=:Z^\cO})$. In addition, the optimal wealth process $X^{\pi^\ast}$ is equal to $X$.
\end{theorem}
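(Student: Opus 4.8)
The statement is essentially a rewriting of Theorem~\ref{th:verif1}, so the plan is to take the processes $Y$ and $Z$ produced there, set $X:=X^{\pi^\ast}$, and check by direct substitution that the triple $(X,Y,Z)$ solves \eqref{eq:fbsde1}. No new probabilistic input is needed beyond Theorem~\ref{th:verif1}; the only work is algebraic bookkeeping with the hedgeable/orthogonal splitting, and the adaptedness and integrability of $(X,Y,Z)$ together with the well-posedness of all stochastic integrals are inherited directly from Theorem~\ref{th:verif1}.

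First I would record the vector form of the optimal strategy from Theorem~\ref{th:verif1}: writing $\rho_s:=U'(X_s+Y_s)/U''(X_s+Y_s)$, one has, as an $\real^d$-valued process, $\pi_s^\ast=-\bigl(\theta_s^{\cH}\rho_s+Z_s^{\cH}\bigr)$, since the last $d_2$ components of $\pi^\ast$ vanish. For the forward equation I would plug this into the wealth dynamics $X_t^{\pi^\ast}=x+\int_0^t\pi_r^\ast\,dS_r^{\cH}$ with $dS_r^{\cH}=dW_r^{\cH}+\theta_r^{\cH}\,dr$, and use that only the first $d_1$ coordinates contribute to both the $dW^{\cH}$-integral and the $\theta^{\cH}$-drift, so that $\theta_s^{\cH}\rho_s+Z_s^{\cH}$ may be replaced by $\theta_s\rho_s+Z_s$ inside those integrals. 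This is exactly the forward line of \eqref{eq:fbsde1}, and in particular $X=X^{\pi^\ast}$.

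For the backward equation I would start from the BSDE \eqref{eq:BSDE1}--\eqref{eq:driver} already derived inside the proof of Theorem~\ref{th:verif1}, namely $Y_t=H-\int_t^T Z_s\,dW_s-\int_t^T f(s,X_s,Y_s,Z_s)\,ds$ with $f=-\tfrac12\frac{U^{(3)}}{U''}(X_s+Y_s)\,|\pi_s^\ast+Z_s|^2-\pi_s^\ast\cdot\theta_s$, and substitute the formula for $\pi_s^\ast$. The key simplification is that $\pi_s^\ast+Z_s=-\theta_s^{\cH}\rho_s+Z_s^{\cO}$, and since $\theta^{\cH}$ and $Z^{\cO}$ are supported on disjoint sets of coordinates, $|\pi_s^\ast+Z_s|^2=|\theta_s^{\cH}|^2\rho_s^2+|Z_s^{\cO}|^2$ while $\pi_s^\ast\cdot\theta_s=-|\theta_s^{\cH}|^2\rho_s-Z_s\cdot\theta_s^{\cH}$. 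Inserting these, and using $\frac{U^{(3)}}{U''}\rho_s^2=\frac{U^{(3)}|U'|^2}{(U'')^3}(X_s+Y_s)$, turns $f$ into precisely the driver appearing in the backward line of \eqref{eq:fbsde1}.

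There is no genuine obstacle here; the only place to be careful is the coordinate bookkeeping --- keeping track of which components of $\theta$, $Z$ and $W$ live in the $\cH$-block versus the $\cO$-block so that the orthogonality identities $\theta_s^{\cH}\cdot Z_s^{\cO}=0$ are applied correctly --- and making sure the substitution of $\pi^\ast$ (which itself depends on $X$ and $Y$) is read as an identity rather than as defining a new equation. Finally I would note that the displayed expression for $\pi^\ast$ is just the one from Theorem~\ref{th:verif1} with $X^{\pi^\ast}$ renamed $X$, which completes the proof.
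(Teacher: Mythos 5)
Your proposal is correct and follows essentially the same route as the paper: invoke Theorem \ref{th:verif1}, set $X:=X^{\pi^\ast}$, and substitute the expression for $\pi^\ast$ into the wealth dynamics and into the driver \eqref{eq:driver} of the BSDE \eqref{eq:BSDE1}. Your explicit bookkeeping of the $\cH$/$\cO$ splitting (in particular $\pi^\ast+Z=-\theta^{\cH}\rho+Z^{\cO}$ and the resulting $|\pi^\ast+Z|^2=|\theta^{\cH}|^2\rho^2+|Z^{\cO}|^2$) is exactly the computation the paper leaves implicit, and it is carried out correctly.
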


\begin{proof}
From Theorem \ref{th:verif1} we know that the optimal strategy is given by
$$ \pi_t^{\ast^i}= -\theta_t^i \frac{U'(X_t^{\pi^\ast}+Y_t)}{U''(X_t^{\pi^\ast}+Y_t)}- Z_t^i, \quad t \in [0,T], \quad i \in \{1,\ldots,d_1\}$$
where $(Y,Z)$ is a solution to the BSDE \eqref{eq:BSDE1} with driver $f$ like in \eqref{eq:driver}. Now plugging the expression of $\pi^\ast$ in relation \eqref{eq:driver} yields
\begin{equation}
\left\lbrace
\begin{array}{lll}
X_t^{\pi^\ast} & = & x-\int_0^t \left(\theta_s \frac{U'(X_s^{\pi^\ast}+Y_s)}{U''(X_s^{\pi^\ast}+Y_s)} + Z_s \right) dW_s^{\mathcal{H}}  -\int_0^t \left(\theta_s \frac{U'(X_s^{\pi^\ast}+Y_s)}{U''(X_s^{\pi^\ast}+Y_s)} + Z_s\right) \cdot \theta_s^{\mathcal{H}} ds\\
\\
Y_t & = & H-\int_t^T Z_s dW_s-\int_t^T \left[-\frac12 |\theta_s^{\mathcal{H}}|^2 \frac{U^{(3)}(X_s^{\pi^\ast}+Y_s) |U^{'}(X_s^{\pi^\ast}+Y_s)|^2}{(U''(X_s^{\pi^\ast}+Y_s))^3} \right. \\
& & \left. + |\theta_s^{\mathcal{H}}|^2 \frac{U^{'}(X_s^{\pi^\ast}+Y_s)}{U''(X_s^{\pi^\ast}+Y_s)} + Z_s \cdot \theta_s^{\mathcal{H}} -\frac12 |Z_s^\cO|^2 \frac{U^{(3)}}{U''}(X_s^{\pi^\ast}+Y_s) \right] ds.
\end{array}
\right.
\end{equation}

Recalling that $X^\pi:=x+ \int_0^\cdot \pi_s (dW_s^{\mathcal{H}}+\theta_s^{\mathcal{H}} ds)$ for any admissible strategy $\pi$, we get the forward part of the FBSDE.
\end{proof}

\begin{remark}
\label{rk:decompU'}
Using It\^o's formula and the FBSDE \eqref{eq:fbsde1}, we have that
$$ U'(X+Y)=U'(x+Y_0) + \int_0^\cdot -\theta_s^\cH U'(X_s+Y_s) dW_s^\cH + \int_0^\cdot U''(X_s+Y_s) Z_s^{\cO} dW_s^{\cO}.$$
\end{remark}

\begin{remark}
\label{re:computprod}
Note that using the system \eqref{eq:fbsde1}, for $\alpha:=U'(X^{\pi^\ast}+Y)$, integration by parts yields for every $t$ in $[0,T]$
\begin{align*}
&U'(X_t^{\pi^\ast}+Y_t) (X_t^{\pi}-X_t^{\pi^\ast})\nonumber\\
&=\int_0^t (X_s^{\pi}-X_s^{\pi^\ast}) d\alpha_s + \int_0^t \alpha_s (\pi_s-\pi_s^{\ast}) dW_s^{\mathcal{H}} \nonumber\\
&\quad + \int_0^t \left( \alpha_s \theta_s^\cH + U''(X_s^{\pi^\ast}+Y_s) (Z_s^\cH+\pi_s^\ast)\right) \cdot (\pi_s-\pi_s^\ast) ds\nonumber\\
&=\int_0^t (X_s^{\pi}-X_s^{\pi^\ast}) d\alpha_s + \int_0^t \alpha_s (\pi_s-\pi_s^{\ast}) dW_s^{\mathcal{H}}
\end{align*}
showing that $U'(X^{\pi^\ast}+Y) (X^{\pi}-X^{\pi^\ast})$ is a local martingale for every $\pi$ in $\Pi^x$.
\end{remark}

The converse implication of Theorems \ref{th:verif1} and \ref{th:verif2} constitutes the second main result.

\begin{theorem}
\label{th:converse}
Let $(H1)$ be satisfied for $U$. Let $(X,Y,Z)$ be a triple of predictable processes which solves the FBSDE \eqref{eq:fbsde1}
satisfying: $Z$ is in $\mathbb{H}^2(\real^d)$, $\E[\vert U(X_T+H) \vert]<\infty$, $\E[|U'(X_T+H)|^2]<\infty$, and $U'(X+Y)$ is a positive martingale.
Moreover, assume that there exists a constant $\kappa>0$ such that $$ -\frac{U'(x)}{U''(x)} \leq \kappa $$ for all $x \in \real$.
Then
$$\pi^{\ast^i}_t:=-\frac{U'(X_t+Y_t)}{U''(X_t+Y_t)} \theta_t^i - Z_t^i, \quad t\in [0,T], \quad i \in \{1,\ldots,d_1\},$$
is an optimal solution of the optimization problem \eqref{eq:opti}.
\end{theorem}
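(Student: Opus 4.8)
The plan is to combine the concavity of $U$ with the (local) martingale property established in Remark~\ref{re:computprod}. First I would check that $\pi^\ast$ is an admissible competitor: since $-U'/U''\le\kappa$, $\theta$ is bounded and $Z\in\mathbb{H}^2(\real^d)$, the process $\pi^\ast$ lies in $\mathbb{H}^2(\real^{d_1})$, hence $\pi^\ast\in\Pi^x$; comparing the forward equation in \eqref{eq:fbsde1} with the definition of the wealth process (exactly as in the proof of Theorem~\ref{th:verif2}) shows $X=X^{\pi^\ast}$, and since $Y_T=H$ and $\E[|U(X_T+H)|]<\infty$ by hypothesis, $\pi^\ast$ satisfies $\E[|U(X_T^{\pi^\ast}+H)|]<\infty$, so it is a legitimate candidate for \eqref{eq:opti}.

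Now fix any $\pi\in\Pi^x$ with $\E[|U(X_T^{\pi}+H)|]<\infty$. Writing $\alpha:=U'(X+Y)$ and using concavity of $U$ together with $X_T^{\pi^\ast}+H=X_T+Y_T$,
\[
U(X_T^{\pi}+H)-U(X_T^{\pi^\ast}+H)\;\le\;U'(X_T+Y_T)\,(X_T^{\pi}-X_T^{\pi^\ast})\;=\;\alpha_T\,(X_T^{\pi}-X_T),
\]
so it suffices to prove $\E[\alpha_T(X_T^{\pi}-X_T)]\le 0$; in fact I expect equality. To get this I would upgrade the local martingale $\alpha(X^{\pi}-X)$ from Remark~\ref{re:computprod} to a true martingale. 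The positive martingale $\alpha$ is $L^2$-bounded because $\alpha_T=U'(X_T+H)\in L^2$ by hypothesis, so $\E[\sup_t\alpha_t^2]<\infty$ by Doob's inequality; the processes $X^{\pi}$ and $X=X^{\pi^\ast}$ both belong to $\mathbb{S}^2(\real)$ (for $X^{\pi}$ using $\pi\in\mathbb{H}^2$, boundedness of $\theta$ and Doob/BDG; for $X$ using in addition $-U'/U''\le\kappa$ and $Z\in\mathbb{H}^2$ in the forward equation). Hence $\sup_t|\alpha_t(X_t^{\pi}-X_t)|\in L^1$ by Cauchy--Schwarz, so the local martingale $\alpha(X^{\pi}-X)$ is of class (D), thus a uniformly integrable martingale; as it starts at $\alpha_0(x-x)=0$, we conclude $\E[\alpha_T(X_T^{\pi}-X_T)]=0$.

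Taking expectations in the displayed inequality (every term being integrable) yields $\E[U(X_T^{\pi}+H)]\le\E[U(X_T^{\pi^\ast}+H)]$ for every admissible competitor $\pi$, which is the assertion.

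The main obstacle is the local-to-true martingale upgrade: one must carefully supply the $\mathbb{S}^2$-estimates for $X^{\pi}$ and for the forward component $X$, together with the $L^2$-boundedness of $\alpha$, so that a single Cauchy--Schwarz/Doob estimate places the product $\alpha(X^{\pi}-X)$ in class (D). The concavity step and the driftlessness underlying Remark~\ref{re:computprod} are essentially already available.
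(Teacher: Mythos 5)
Your proof is correct, and it takes a mildly but genuinely different route from the paper's. The paper changes measure: it defines $d\Q/d\P = U'(X_T+H)/\E[U'(X_T+H)]$, invokes Girsanov to make every $X^\pi$ a $\Q$-local martingale, and then shows $\E_\Q[X_T^\pi - X_T^{\pi^\ast}]=0$ by localization plus a BDG/Cauchy--Schwarz bound on $\sup_t\vert\int_0^t(\pi_s-\pi_s^\ast)\,d\tilde W_s^\cH\vert$ under $\Q$. You instead stay under $\P$ and upgrade the product local martingale $\alpha(X^\pi-X)$ of Remark \ref{re:computprod} to a uniformly integrable martingale via Doob's $L^2$ inequality for $\alpha$, $\mathbb{S}^2$-bounds for $X^\pi$ and $X$, and a class (D) argument; both routes then finish with the same concavity inequality. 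The two arguments are essentially dual to one another and consume exactly the same hypotheses ($\E[\vert U'(X_T+H)\vert^2]<\infty$, $\pi,\pi^\ast\in\mathbb{H}^2$, boundedness of $\theta$ and of the risk tolerance), so neither is stronger; the paper's version has the interpretive benefit of exhibiting the ``pricing measure'' explicitly, while yours is more self-contained in that it avoids Girsanov entirely and leans directly on the driftlessness already recorded in Remark \ref{re:computprod}. The one point to make sure you write out in full is the $\mathbb{S}^2$-estimate for the forward component $X$: you should note that the $dW^\cH$-integrand in \eqref{eq:fbsde1} is exactly $-\pi^\ast$ restricted to its first $d_1$ coordinates, so that $\vert\pi^\ast\vert\le\kappa\vert\theta\vert+\vert Z\vert\in\mathbb{H}^2$ and the standard Doob/BDG estimate applies; with that spelled out the class (D) step is airtight.
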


\begin{proof}
Note first that by definition of $\pi^\ast$, $X=X^{\pi^\ast}$.
Since the risk tolerance $ -\frac{U'(x)}{U''(x)}$ is bounded and since $Z$ is in $\mathbb{H}^2(\real^d)$, we immediately get $\E\left[ \int_0^T |\pi^\ast_s|^2 ds \right]<\infty$, thus, $\pi\in\Pi^x$.
By assumption, $U'(X + Y)$ is a positive continuous martingale, hence there exists a continuous local martingale $L$ such that $U'(X+Y)=\mathcal{E}(L)$. And we know from Remark \ref{rk:decompU'} that
$$L= \log(U'(x+Y_0)) + \int_0^\cdot -\theta_s^\cH dW_s^\cH + \int_0^\cdot \frac{U''(X_s+Y_s)}{U'(X_s+Y_s)} Z_s^{\mathcal{O}} dW_s^{\mathcal{O}}.$$
Define the probability measure $\mathbb{Q} \sim \IP$ by $$ \frac{d \mathbb{Q}}{ d \mathbb{P} } := \frac{U'(X_T + H)}{\E[U'(X_T + H)]}.$$
Girsanov's theorem implies that $\tilde{W}:=\tilde{W}^\cH+\tilde{W}^\cO=(W^1 + \theta^1 \cdot dt,\ldots,W^{d_1} + \theta^{d_1} \cdot dt,W^{d_1+1} - \frac{U''(X+Y)}{U'(X+Y)} Z^{d_1+1} \cdot dt,\ldots,W^{d_2} - \frac{U''(X+Y)}{U'(X+Y)} Z^{d_2} \cdot dt)$ is a standard Brownian motion under $\Q$. Thus $X^\pi$ is a local martingale under $\Q$ for every $\pi$ in $\Pi^x$.
Now fix $\pi$ in $\Pi^x$ with $\E[\vert U(X_{T}^\pi+H) \vert]<\infty$. Let $(\tau_n)_n$ be a localizing sequence for the local martingale $X^\pi-X^{\pi^\ast}$.
Since $U$ is a concave, we have
\begin{equation}
\label{eq:reversegene1}
U(X_T^{\pi}+H)-U(X_T^{\pi^\ast}+H) \leq U'(X_T^{\pi^\ast}+H) (X_T^{\pi}-X_T^{\pi^\ast}).
\end{equation}
Taking expectations in \eqref{eq:reversegene1} we get
\begin{align*}
\frac{\E[U(X_T^{\pi}+H)-U(X_T^{\pi^\ast}+H)]}{\E[U'(X_T + H)]} &\leq \E_\Q[X_T^{\pi}-X_T^{\pi^\ast}]\\
& = \E_\Q\left[\lim_{n\to \infty }\int_0^{T \wedge \tau_n} (\pi_s-\pi^\ast_s) d\tilde{W}_s^\cH\right]\\
& = \lim_{n\to \infty }\E_\Q\left[\int_0^{T \wedge \tau_n} (\pi_s-\pi^\ast_s) d\tilde{W}_s^\cH\right]=0,
\end{align*}
which eventually follows as a consequence of Lebesgue's dominated convergence theorem. To this end we prove that
$$ \E_\Q\left[ \sup_{t\in [0,T]} \left|\int_0^t (\pi_s-\pi^\ast_s) d\tilde{W}_s^\cH\right| \right]<\infty. $$
Indeed the BDG inequality and the Cauchy-Schwarz inequality imply that
\begin{align*}
&\E_\Q\left[ \sup_{t\in [0,T]} \left|\int_0^t (\pi_s-\pi^\ast_s) d\tilde{W}_s^\cH\right| \right]\\
& \leq C \E_\Q\left[\left(\int_0^T |\pi_s-\pi^\ast_s|^2 ds\right)^{\frac12} \right]\\
& = C \E\left[\frac{U'(X_T+H)}{\E[U'(X_T+H)]} \left(\int_0^T |\pi_s-\pi^\ast_s|^2 ds\right)^{\frac12} \right]\\
& \leq C \E\left[\left|\frac{U'(X_T+H)}{\E[U'(X_T+H)]}\right|^2\right]^{\frac12} \E\left[ \int_0^T |\pi_s-\pi^\ast_s|^2 ds \right]^\frac12<\infty.
\end{align*}
\end{proof}

We have proved in Theorem \ref{th:verif2} that if \eqref{eq:opti} exhibits an optimal strategy $\pi^\ast\in \Pi^x$, then there exists an adapted solution to the FBSDE \eqref{eq:fbsde1}. As a byproduct we showed the optimization procedure singles out a ``pricing measure'' under which the asset prices and marginal utilities are martingales. In that sense, the process $Y$ captures the impact of future trading gains on the agent's marginal utilities. If we assume additional conditions on the utility function $U$, we get the following regularity properties of the solution $(X,Y,Z)$.

\begin{prop}
\label{prop:regularity}
Assume that for $H\in L^\infty(\Omega,\cF_T,\IP)$ and that the FBSDE \eqref{eq:fbsde1} admits an adapted solution $(X,Y,Z)$ such that $Y$ is bounded. Let
$$\varphi_1(x):=\frac{U'(x)}{U''(x)}, \quad \varphi_2(x):=\frac{U^{(3)}(x) |U'(x)|^2}{(U''(x))^3}, \quad \varphi_3(x):=\frac{U^{(3)}(x)}{U''(x)}, \quad x\in \real. $$
Assume that $U$ is such that $\varphi_i$, $i=1,2,3$ are bounded and Lipschitz continuous functions. Then $(X,Y,Z)$ is the unique solution of \eqref{eq:fbsde1} in $\cS^2(\real) \times \cS^\infty(\real) \times \mathbb{H}^2(\real^d)$. In addition,
$Z\cdot W$ is a BMO-martingale.
\end{prop}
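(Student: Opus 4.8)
The statement has three parts — that $(X,Y,Z)\in\cS^2(\real)\times\cS^\infty(\real)\times\mathbb{H}^2(\real^d)$, that $Z\cdot W$ is a BMO-martingale, and that the solution is unique in this class — and I would derive the first two from an a priori estimate of the usual quadratic-BSDE type. Write the backward component of \eqref{eq:fbsde1} as $dY_t=Z_t\,dW_t+f(t,X_t,Y_t,Z_t)\,dt$. Since $\theta$ and $\varphi_1,\varphi_2,\varphi_3$ are bounded and $z\cdot\theta^\cH$ is absorbed by Young's inequality, there are constants $a,b$, \emph{independent of $(x,y)$}, with $|f(t,x,y,z)|\le a+b|z|^2$. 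Put $m:=\|Y\|_\infty$, $\bar Y:=Y+m\ge 0$, and apply Itô's formula to $\phi(\bar Y_t)$ with the convex function $\phi(u)=(2b)^{-2}(e^{2bu}-2bu-1)$, which satisfies $\phi,\phi'\ge 0$ on $[0,\infty)$ and $\phi''-2b\phi'\equiv 1$. Rearranging, using $|f|\le a+b|z|^2$, conditioning on $\cF_\tau$ for an arbitrary stopping time $\tau$ and localising the stochastic integral, one obtains $\E[\int_\tau^T|Z_s|^2\,ds\mid\cF_\tau]\le 2\|\phi\|_{\infty,[0,2m]}+2aT\|\phi'\|_{\infty,[0,2m]}$, i.e. $Z\cdot W\in\mathrm{BMO}(\P)$; in particular $Z\in\mathbb{H}^2(\real^d)$ and, by the energy / John--Nirenberg inequalities, $\E[(\int_0^T|Z_s|^2\,ds)^p]<\infty$ for every $p$. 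Feeding this and the boundedness of $\varphi_1,\theta$ into the forward equation, Burkholder--Davis--Gundy gives $X\in\cS^p(\real)$ for all $p$, hence $X\in\cS^2(\real)$; the same computation applies to any solution in the stated class.

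For uniqueness, let $(X^1,Y^1,Z^1)$ and $(X^2,Y^2,Z^2)$ be two solutions in $\cS^2\times\cS^\infty\times\mathbb{H}^2$, with associated strategies $\pi^{\ast,1},\pi^{\ast,2}\in\Pi^x$ (admissibility from $Z^i\in\mathbb{H}^2$ and $\varphi_1$ bounded) and $X^i=X^{\pi^{\ast,i}}$. I would first show $X^1_T=X^2_T$: rerunning the computation in the proof of Theorem \ref{th:converse} with the measure $\Q^i$ of density $U'(X^i_T+H)/\E[U'(X^i_T+H)]$, under which $S^\cH$ becomes a martingale, shows that each $\pi^{\ast,i}$ is optimal for \eqref{eq:opti}; hence $\E[U(X^1_T+H)]=\E[U(X^2_T+H)]$, both equal to the value, and since $\Pi^x$ is convex and $U$ is \emph{strictly} concave the optimal terminal wealth is unique, so $X^1_T=X^2_T$ $\P$-a.s. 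Next, under the martingale measure $\check\Q$ of density $\mathcal{E}(-\theta\cdot W)_T$ — a true martingale with moments of all orders, $\theta$ being bounded — each $X^i=x+\int_0^\cdot\pi^{\ast,i}_s\,d\check W^\cH_s$ is a $\check\Q$-local martingale which, lying in $\cS^2(\check\Q)$ (because $X^i\in\cS^p(\P)$ for all $p$ and the density has moments of all orders), is a true $\check\Q$-martingale; therefore $X^1_t=\E_{\check\Q}[X^1_T\mid\cF_t]=\E_{\check\Q}[X^2_T\mid\cF_t]=X^2_t$ for all $t$. Finally, with the common forward process $X:=X^1=X^2$ plugged in, the backward parts of \eqref{eq:fbsde1} show that $(Y^1,Z^1)$ and $(Y^2,Z^2)$ solve one and the same BSDE $\mathcal Y_t=H-\int_t^T\mathcal Z_s\,dW_s-\int_t^T f(s,X_s,\mathcal Y_s,\mathcal Z_s)\,ds$, with bounded terminal value, driver Lipschitz in $\mathcal Y$ and of quadratic growth in $\mathcal Z$ with bounded coefficient $\varphi_3$; uniqueness for quadratic BSDEs in the class $\cS^\infty\times\mathrm{BMO}$ (Kobylanski, resp. the a priori estimates for BMO solutions) yields $(Y^1,Z^1)=(Y^2,Z^2)$.

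The delicate step, and the one I expect to be the main obstacle, is the integrability needed to rerun the Theorem \ref{th:converse} argument for a \emph{comparison} solution: boundedness of $\varphi_1=U'/U''$ only forces the risk tolerance to be bounded and does not by itself control $U'$ near $-\infty$, so one must obtain $\E[|U'(X^i_T+H)|^2]<\infty$, $\E[|U(X^i_T+H)|]<\infty$ and the martingale property of $U'(X^i+Y^i)$ from the BMO property of $Z^i\cdot W$ (hence exponential integrability of $X^i_T$) combined with the growth of $U'$ implicit in the hypotheses, or else carry these conditions over from the standing setting of Theorem \ref{th:converse}. Once this is secured, the remaining passages from local to true martingales — both in the a priori estimate and in the optimality argument — are routine, and the proof closes by citing quadratic-BSDE uniqueness.
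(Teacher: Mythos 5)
Your proposal is correct and follows essentially the same route as the paper: the paper obtains $Z\in\mathbb{H}^2$, the BMO property and uniqueness of $(Y,Z)$ for fixed $X$ by citing standard quadratic-BSDE results (Morlais) rather than redoing the exponential-transform estimate, deduces $X\in\cS^2$ from the forward equation, and proves uniqueness of $X$ exactly as you do — by invoking Theorem \ref{th:converse} to conclude that any second solution yields an optimal strategy and then using strict concavity of $U$ and convexity of $\Pi^x$. The integrability hypotheses of Theorem \ref{th:converse} that you flag as the delicate step are likewise left implicit in the paper's own argument, so your version is, if anything, the more careful one.
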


\begin{proof}
Let $(X,Y,Z)$ be a solution to \eqref{eq:fbsde1} such that $Y$ is bounded. Then, using the usual theory on quadratic growth BSDEs (see for example \cite[Theorem 2.5 and Lemma 3.1]{Morlais}) we have only from the backward part of the FBSDE that $Z$ is in $\mathbb{H}^2(\real^d)$ and that $Z \cdot W$ is a BMO-martingale. In addition there exists a unique solution to the backward component in this space for a given process $X$. Now the previous regularity properties of the processes $(Y,Z)$ imply that $X$ is in $\cS^2(\real)$. We turn to the uniqueness of the $X$ process. Assume that there exists another solution $(X',Y',Z')$ of \eqref{eq:fbsde1}. Hence, Theorem \ref{th:converse} implies that $\pi^{\ast'}:=-\frac{U'(X'+Y')}{U''(X'+Y')} \theta^i + Z'^i, \quad i \in \{1,\ldots,d_1\}$ is an optimal solution to our original problem \eqref{eq:opti} and $X'$ is the optimal wealth process. However, by strict concavity of $U$ and by convexity of $\Pi^x$ the optimal strategy has to be unique. So $X$ and $X'$ are the wealth processes of the same optimal strategy, thus, they have to coincide (for instance $X_T=X_T', \, \P-a.s.$) which implies $(Y',Z') = (Y,Z)$.
\end{proof}

In the complete case we are able to construct the solution $(X,Y,Z)$. This is the subject of the following Section. %\ref{section:completereal}.

\subsection{Characterization and verification: complete markets}
\label{section:completereal}
In this section we consider the benchmark case of a complete market. We assume $d=1$ for simplicity. $H$ denotes a square integrable random variable measurable with respect to the Brownian motion $W$.\\\\
\noindent
In the complete case we can give sufficient conditions for the existence of a solution to the system \eqref{eq:fbsde1}. Our construction relies on the following remark.
\begin{remark}
Using \eqref{eq:fbsde1} the martingale $U'(X^{\pi^\ast}+Y)$ becomes more explicit, because It\^o's formula applied to $U'(X^{\pi^\ast}+Y)$ yields
\begin{align*}
U'(X_t^{\pi^\ast}+Y_t)&=U'(x+Y_0) + \int_0^t U''(X_s^{\pi^\ast}+Y_s) (\pi^\ast_s+Z_s) dW_s \\
&=U'(x+Y_0) - \int_0^t U'(X_s^{\pi^\ast}+Y_s) \theta_s dW_s,
\end{align*}
where we have replaced $\pi^\ast$ by its characterization in terms of $(X,Y,Z)$ from Theorem \ref{th:verif1}. Hence,
\begin{equation}
\label{eq:mart}
U'(X^{\pi^\ast}_t+Y_t)=U'(x+Y_0) \mathcal{E}(-\theta \cdot W)_t, \quad t \in [0,T].
\end{equation}
\end{remark}
This remark will allow us to prove existence of a solution to the system \eqref{eq:fbsde1} under a condition on the risk aversion coefficient $-\frac{U''}{U'}$ of $U$. To this end, we give a sufficient condition on $U$ for the system \eqref{eq:fbsde1} to exhibit a solution. We have the following remark.
\begin{remark}
If $(X,Y,Z)$ is an adapted solution to the system \eqref{eq:fbsde1}, then $P:=X+Y$ is solution of the forward SDE
\begin{equation}
\label{eq:SDE1}
P_t=x+Y_0-\int_0^t \theta_s \frac{U'(P_s)}{U''(P_s)} dW_s - \int_0^t \frac12 |\theta_s|^2 \frac{U^{(3)}(P_s) |U^{'}(P_s)|^2}{(U''(P_s))^3} ds, \quad t\in [0,T].
\end{equation}
In addition, if $(X,Y,Z)$ is in $\cS^2(\real) \times \cS^2(\real) \times \mathbb{H}^2(\real^d)$, then $P \in \cS^2(\real)$. Thus a necessary condition for the FBSDE \eqref{eq:fbsde1} to have a solution is that the SDE \eqref{eq:SDE1} admits a solution.
\end{remark}

We are now going to state an existence result for the FBSDE system \eqref{eq:fbsde1} that characterizes optimal trading strategies in terms of the functions $\varphi_1(x)=\frac{U'(x)}{U''(x)}$ and $\varphi_2(x)=\frac{U^{(3)}(x) |U^{'}(x)|^2}{(U''(x))^3}$.

\begin{prop}
\label{prop:exist1}
Assume that the functions $\varphi_1$ and $\varphi_2$ are bounded and Lipschitz continuous. Then the FBSDE
\begin{equation}
\label{eq:fbsde1complete}
\left\lbrace
\begin{array}{l}
X_t=x-\int_0^t \left( \theta_s \frac{U'(X_s+Y_s)}{U''(X_s+Y_s)} + Z_s\right) dW_s -\int_0^t \left( \theta_s \frac{U'(X_s+Y_s)}{U''(X_s+Y_s)} + Z_s\right) \cdot \theta_s ds\\
\\
Y_t=H-\int_t^T Z_s dW_s -\int_t^T \left( -\frac12 |\theta_s|^2 \frac{U^{(3)}(X_s+Y_s) |U^{'}(X_s+Y_s)|^2}{(U''(X_s+Y_s))^3} + |\theta_s|^2 \frac{U^{'}(X_s+Y_s)}{U''(X_s+Y_s)} \right. \\ \left.  ~~~~~~~~~~~~~~~~~~~~~~~~~~~~~~~~~~~ + Z_s \cdot \theta_s \right) ds
\end{array}
\right.
\end{equation}
admits a solution $(X,Y,Z)$ in $\cS^2(\real) \times \cS^2(\real) \times \mathbb{H}^2(\real^d)$ such that $\E[|U(X_T+H)|]<\infty$ and $\E[|U'(X_T+H)|^2]<\infty$.
\end{prop}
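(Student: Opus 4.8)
The plan is to exploit the key observation from \eqref{eq:mart}: along any solution the process $U'(X+Y)$ must equal a constant multiple of the stochastic exponential $\mathcal{E}(-\theta\cdot W)$. Since $d=1$, this completely pins down the coupling. First I would guess the initial value of the backward component. For each candidate constant $p_0 = x+Y_0$, consider the forward SDE \eqref{eq:SDE1} for $P = X+Y$ started at $p_0$; because $\varphi_1$ and $\varphi_2$ are bounded and Lipschitz, standard SDE theory gives a unique strong solution $P^{p_0}\in\cS^2(\real)$, and moreover $P^{p_0}$ depends continuously (indeed monotonically) on $p_0$. The real content is to show one can choose $p_0$ so that the resulting terminal condition is consistent with $Y_T = H$, i.e. so that $(U')^{-1}\bigl(U'(p_0)\mathcal{E}(-\theta\cdot W)_T\bigr) - X_T = H$, where $X_T$ is recovered from $P^{p_0}$ via the forward wealth equation. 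Equivalently, writing the wealth equation and $Y = P^{p_0} - X$ out explicitly, one needs the map $p_0 \mapsto Y_T(p_0) - H$ to have a zero.

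Concretely, the construction I have in mind is: fix $p_0$, solve \eqref{eq:SDE1} for $P$; then define $M_t := U'(P_t)$ and check via It\^o (using the drift of $P$ and the identity $\varphi_2 = \varphi_1'\varphi_1 + \varphi_1^2 U^{(3)}/U''$... more precisely by direct computation) that $M$ solves $dM_t = -\theta_t U'(P_t)\,dW_t$, hence $M_t = U'(p_0)\mathcal{E}(-\theta\cdot W)_t$ is a positive martingale (it is in every $L^q$ since $\theta$ is bounded and $U'(p_0)$ is a constant, so in particular a true martingale). Next, define the wealth process $X$ by the forward equation in \eqref{eq:fbsde1complete} — but note $\pi^\ast = -\varphi_1(P)\theta - Z$ where $Z$ is not yet known; so instead I would define $Z$ as the integrand in the martingale representation of the martingale $N_t := \E[\,\cdot\,|\cF_t]$ associated to the terminal data, or more cleanly: set $Y_t := P_t - X_t$ and require $Y$ to satisfy the BSDE; then $Z$ is forced to be the martingale representation integrand of $Y + \int_0^\cdot(\text{drift})$. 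The cleanest route is probably to run a fixed-point / decoupling argument on the pair $(X,Z)$ with $P$ already determined, since the backward driver is, given $P$, linear in $Z$ up to the bounded-coefficient quadratic term $Z\cdot\theta$, and $Z\cdot W$ being BMO (as in Proposition \ref{prop:regularity}) controls everything.

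The genuinely hard step is the shooting argument: showing that there exists $p_0$ with $Y_T(p_0) = H$, $\P$-a.s., when $H$ is a general $\cF_T$-measurable square-integrable random variable rather than a number. A single scalar parameter $p_0$ cannot in general hit a prescribed random variable at a single point; so the correct formulation must instead be to let the "constant" $U'(p_0)$ be replaced by a random variable, i.e. to let $U'(X_T+H) = \xi\,\mathcal{E}(-\theta\cdot W)_T$ where $\xi = U'(x+Y_0)$ is $\cF_0$-trivial only if $Y_0$ is deterministic — which it is, since $Y_0 = \E[\cdots]$ is $\cF_0$-measurable hence constant. The resolution, and the step I expect to be the crux, is to run the argument backwards from $H$: define the terminal value $U'(X_T+H)$ only up to the unknown normalizing constant $c := U'(x+Y_0)$, set $\alpha^c_t := c\,\mathcal{E}(-\theta\cdot W)_t$, let $\Xi^c_T := (U')^{-1}(\alpha^c_T) - H$ (this is the candidate $X_T$), and use the martingale representation of $\E[\,(U')^{-1}(\alpha^c_T) - H\mid\cF_t]$ to build $X$, $Y$, $Z$; then impose the forward initial condition $X_0 = x$, which becomes a single scalar equation $\Phi(c) = x$ for the scalar $c$. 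One shows $\Phi$ is continuous and strictly monotone in $c$ (using strict concavity of $U$, so $(U')^{-1}$ is strictly decreasing, and the integrability afforded by boundedness of $\theta$ together with boundedness of $\varphi_1,\varphi_2$ to get $X\in\cS^2$ and $\E[|U(X_T+H)|]<\infty$), with range covering $(0,\infty)\ni x$, and invokes the intermediate value theorem. Verifying the limits $\Phi(c)\to\pm\infty$ (or to the endpoints of the relevant interval) as $c\to 0^+,\infty$, and checking the claimed integrability $\E[|U'(X_T+H)|^2]<\infty$ and $\E[|U(X_T+H)|]<\infty$ along the way, are the remaining routine-but-delicate estimates, all of which reduce to the finite-moment property of $\mathcal{E}(-\theta\cdot W)$ recorded in Section \ref{section:prelim} and the Lipschitz/bounded hypotheses on $\varphi_1,\varphi_2$.
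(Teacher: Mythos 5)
You have the right decoupling idea, which is exactly the paper's starting point: adding the forward and backward equations cancels $Z$, so $P=X+Y$ solves the autonomous Lipschitz SDE \eqref{eq:SDE1} once its initial value $x+m$ is prescribed, and the problem reduces to a one-parameter shooting argument. Where you go astray is in identifying what the scalar parameter must match. You write that ``the real content is to show one can choose $p_0$ so that the resulting terminal condition is consistent with $Y_T=H$,'' and then, observing correctly that a scalar cannot hit a random variable, you abandon this route. But that was never the matching condition: given $P^m$, one solves the \emph{backward} equation for $(Y^m,Z^m)$ with terminal condition $H$ --- a BSDE whose driver $-\frac12|\theta|^2\varphi_2(P^m)+|\theta|^2\varphi_1(P^m)+z\cdot\theta$ is bounded in $p$ and Lipschitz (indeed linear) in $z$, so classical theory gives a unique $(Y^m,Z^m)\in\cS^2(\real)\times\mathbb{H}^2(\real^d)$ with $Y^m_T=H$ automatically. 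The only consistency requirement is then the \emph{scalar, deterministic} equation $Y_0^m=m$, which the paper settles by showing $m\mapsto Y_0^m$ is continuous and bounded uniformly in $m$ (the bound coming from the boundedness of $\varphi_1,\varphi_2,\theta$) and applying the intermediate value theorem to $m\mapsto Y_0^m-m$. Setting $X=P^{m^\ast}-Y^{m^\ast}$ finishes the construction, with $Z\in\mathbb{H}^2(\real^d)$ delivered for free by the BSDE estimates under $\P$.

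Your fallback construction --- parametrize by $c=U'(x+Y_0)$, set $\alpha^c=c\,\mathcal{E}(-\theta\cdot W)$, define $X$ as the $\Q$-martingale with terminal value $(U')^{-1}(\alpha^c_T)-H$ via martingale representation, and match $X_0=x$ by monotonicity of $\Phi(c)=\E_\Q[(U')^{-1}(c\,\mathcal{E}(-\theta\cdot W)_T)-H]$ --- is a legitimate dual-type alternative and does produce a process triple satisfying the equations (the drift bookkeeping checks out). However, it leaves a genuine gap at precisely the point the statement requires: $Z\in\mathbb{H}^2(\real^d)$ \emph{under $\P$}. Martingale representation under $\Q$ only yields $\E_\Q\bigl[\int_0^T|\pi_s|^2\,ds\bigr]<\infty$; transferring this to $\E_\P\bigl[\int_0^T|\pi_s|^2\,ds\bigr]<\infty$ via Cauchy--Schwarz requires $\pi\in\mathbb{H}^4(\Q)$, hence roughly $X_T\in L^4(\Q)$, which is not available for a general $H\in L^2(\Omega,\cF_T,\P)$. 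This is not a ``routine-but-delicate estimate''; it is the reason the paper routes the backward component through a Lipschitz BSDE solved directly under $\P$ rather than through the risk-neutral replication argument.
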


\begin{proof}
Let $m$ in $\real$. Consider the following SDE
$$ P_t^m=x+m-\int_0^t \theta_s \varphi_1(P_s^m) dW_s - \int_0^t \frac12 |\theta_s|^2 \varphi_2(P_s^m) ds, \quad t \in [0,T].$$
Since this SDE has Lipschitz coefficients the existence and uniqueness of a solution in $\cS^2(\real)$ is guaranteed (see for example \cite[V.3. Lemma 1]{Protter}). Next, consider the BSDE
\begin{equation}
\label{eq:Y^m}
Y_t^m=H-\int_t^T Z_s^m dW_s-\int_t^T \left( -\frac12 |\theta_s|^2 \varphi_2(P_s^m) + |\theta_s|^2 \varphi_1(P_s^m) + Z_s^m \cdot \theta_s \right) ds.
\end{equation}
We denote its driver by $f(s,p,z):=-\frac12 |\theta_s|^2 \varphi_2(p) + |\theta_s|^2 \varphi_1(p) + z \cdot \theta_s$. Using the regularity properties of $\varphi_1$ and $\varphi_2$ and the fact that $\theta$ is bounded, there exists a constant $K>0$ such that
$$ |f(s,p,z)| \leq K (1+|z|) $$
and the constant $K$ depends only on $\alpha_1, \alpha_2$ and on $\|\theta\|_{\infty}$, thus in particular $K$ does not depend on $m$. Since the driver $f$ is Lipschitz in $z$, there exists a unique pair of adapted processes $(Y^m,Z^m)$ in $\cS^2(\real) \times \mathbb{H}^2(\real^d)$ which solves \eqref{eq:Y^m}. In addition, $|Y^m_t| \leq K$ holds $\P$-a.s. for all $t$ in $[0,T]$. We recall that this constant $K$ does not depend on $m$, thus $|Y_0^m|\leq K$. Using usual arguments we can show that the map $m \mapsto Y_0^m$ is continuous. Even if this procedure is somehow standard, we reprove this fact here to make the paper self-contained. Fix $m,m'$ in $\real$ with $m\neq m'$. We set $\delta Y_t:=Y_t^m-Y_t^{m'}$, $\delta Z_t:=Z_t^m-Z_t^{m'}$. By \eqref{eq:Y^m} it follows that $(\delta Y, \delta Z)$ is solution to the Lipschitz BSDE:
$$ \delta Y_t =0 -\int_t^T \delta Z_s dW_s -\int_t^T (\theta_s \delta Z_s + h(s)) ds $$
with $h(s):= \frac12 |\theta_s|^2 (\varphi_2(P_s^m)-\varphi_2(P_s^{m'})) + |\theta_s|^2 (\varphi_1(P_s^m)-\varphi_1(P_s^{m'}))$. Using classical a priori estimates for Lipschitz growth BSDEs (see for example \cite[Lemma 2.2]{MaZhang}) we get that:
$$ |\delta Y_0|^2 \leq \E[\sup_{t \in [0,T]} |\delta Y_t|^2 ] \leq C \E\left[\int_0^T |h(s)|^2 ds\right].$$
The boundedness of $\theta$ and the Lipschitz assumption on $\varphi_1$ and on $\varphi_2$ immediately imply that
$$ \E\left[\int_0^T |h(s)|^2 ds\right] \leq C \E\left[\int_0^T |P_s^m - P_s^{m'}|^2 ds\right] \leq C \E\left[\sup_{t\in [0,T]} |P_t^m - P_t^{m'}|^2\right].$$
Combining the inequalities above with classical estimates on Lipschitz SDEs (see for example \cite[Estimate (***) in the proof of Theorem V.7.37]{Protter}) we finally get that
$$ |\delta Y_0|^2 \leq C |m-m'|^2 $$ which concludes the proof by letting $m'$ tending to $m$.
This conjunction with $m \mapsto Y_0^m$ being bounded yields that there exists an element $m^\ast \in \real$ such that $Y_0^{m^\ast}=m^\ast$. Setting
$$ X_t^{m^\ast}:=P_t^{m^\ast}-Y_t^{m^\ast}, \quad t \in [0,T], $$
it is straightforward to check that $(X^{m^\ast},Y^{m^\ast},Z^{m^\ast})$ satisfies  \eqref{eq:fbsde1complete}. Moreover, we have $X^{m^\ast}\in \cS^2(\real)$ since $Y^{m^\ast}$ is bounded and since $P^{m^\ast}\in\cS^2(\real)$. Next, note that $\E[|U'(X_T+Y_T)|^2]<\infty$ since $U'(X_T+Y_T)=U'(x+m)\mathcal{E}(-\theta \cdot W)$. Now using the concavity of $U$, it holds that
$$ U(x) \leq U'(0) x +U(0), \quad -U(x) \leq -U'(x) x-U(0), \quad \forall x \in \real. $$
Consequently,
$$ \E[|U(X_T+H)|] \leq \E[|U'(0)| ~ |X_T+H| + |U(0)|] + \E[|U'(X_T+H) (X_T+H)| + |U(0)|] <\infty. $$
\end{proof}

\section{Utility functions on the positive half-line}
\label{section:halfline}

In this section we study utility functions $U:\real^+ \to \real$ defined on the positive half-line. Again, we assume that $U$ is strictly increasing and strictly concave. \\\\\noindent
In the previous section we have derived a FBSDE characterization of the optimal strategy for the utility maximization problem \eqref{eq:opti}. The key observation was that there exists a stochastic process $Y$ such that $U'(X^{\pi^\ast}+Y)$ is a martingale. However if $U$ is only defined on the positive half-line, it is not clear \textit{a priori} that the expression $U'(X^{\pi^\ast}+Y)$ makes sense. We could generalize this approach by looking for a function $\Phi$ such that $\Phi(X_t^{\pi^\ast},Y_t)$ is a martingale and such that $\Phi(X_T^{\pi^\ast},Y_T)=U'(X_T^{\pi^\ast}+H)$. When $H=0$, it turns out that a good choice of function $\Phi$ is $\Phi(x,y):=U'(x)\exp(y)$ since the system we obtain coincides (up to a non-linear transformation) with the one obtained by Peng in \cite[Section 4]{Peng} using the maximum principle. Note that the system of Peng is not formulated as a FBSDE but rather as a system of equations: one for the wealth process whose dynamics depend on the strategy and one adjoint equation, but a reformulation of this system of equation allows to get a FBSDE (details are given in Section \ref{section:max}).\\\\
\noindent
In the previous section, $\pi$ denoted the total amount of money invested into the stock (the number of shares being $\pi/\tilde{S}$). Now we denote by $\pi^i$ the proportion of wealth invested in the $i$-th stock $S^i$. Once again we denote by $\Pi^x$ the set of admissible strategies with initial capital $x$ which is now defined by
\begin{equation}
\label{eq:admissiblebispower}
\Pi^x:=\L\{ \pi:\Omega \times [0,T] \to \real^{d_1}, \; \pi \mbox{ is predictable}, \;  \E\L[\int_0^T |\pi_s|^2 ds\R]<\infty \R\}.
\end{equation}
The associated wealth process is given by
$$ X_t^\pi:=x+\int_0^t \pi_s X_s^{\pi} dS_s^{\mathcal{H}}, \quad t\in [0,T].$$
Again, we extend $\pi$ to $\real^d$ via $\tilde{\pi}:=(\pi^1,\ldots,\pi^{d_1},0,\ldots,0)$ and make the convention that we write $\pi$ instead of $\tilde{\pi}$.
Thus, we have
$$ X_t^\pi=x \mathcal{E}\left(\int_0^\cdot \pi_r dS_r^{\mathcal{H}} \right)_t, \quad t \in [0,T].$$

From now one we consider a positive $\mathcal{F}_T$-measurable random variable $H$. We furthermore need to impose the following assumptions on $U$.
\\\\\textbf{(H3)} $U:\real^+\to \real$ is three times differentiable, strictly increasing and concave\\\\
\textbf{(H4)} We say that assumption (H4) holds for an element $\pi^\ast$ in $\Pi^x$, if
\begin{itemize}
\item[(i)] $\E[|X_T^{\pi^\ast} U'(X_T^{\pi^\ast}+H)|^2]<\infty;$
\item[(ii)]  the sequence of random variables
$$\left( \frac{1}{\varepsilon}(X_T^{\pi^\ast+\varepsilon \rho}-X_T^{\pi^\ast}) \int_0^1 U'(X_T^{\pi^\ast}+H+r (X_T^{\pi^\ast+\varepsilon \rho}-X_T^{\pi^\ast})) dr \right)_{\varepsilon \in (0,1)}$$
is uniformly integrable;
\item[(iii)]  $$ \lim_{\varepsilon \to 0} \sup_{t\in [0,T]} \E\left[\left|\frac{1}{\varepsilon}(X_t^{\pi^\ast+\varepsilon \rho}-X_t^{\pi^\ast})-\xi_t\right|^2\right] = 0, $$ 
where $d\xi_t=\pi_t^\ast \xi_t dS_t^\cH + \rho_t X_t^{\pi^\ast} dS_t^\cH, \quad t\in [0,T]$, and $\sup_{t\in[0,T]} \E[\vert \xi_t \vert^2]<\infty$.
\end{itemize}
\textbf{(H5)} There exists a constant $c>0$ such that $\frac{-U'(x)}{x U''(x)}\leq c$ for all $x \in \real^+$.\\\\

\subsection{Characterization and verification: incomplete markets}

Note that in condition (H4), if $U'(0)<\infty$ or if $H\geq a>0$ is satisfied, then (iii) implies (ii).

\begin{theorem}\label{theorem:positive_opt_bsde}
Assume that (H3) holds and that $H$ is a positive random variable belonging to $L^2(\Omega,\mathcal{F}_T,\P)$. Let $\pi^\ast$ be an optimal solution to \eqref{eq:opti} satisfying $\E[|U(X_T^{\pi^\ast}+H)|]<\infty$ and which satisfies assumption (H4). Then there exists a predictable process $Y$ with $Y_T = \log(U'(X_T^{\pi^\ast}+H))-\log(U'(X_T^{\pi^\ast}))$ such that $X^{\pi^\ast}U'(X^{\pi^\ast}) \exp(Y)$ is a martingale and $$\pi_s^{\ast^i}=- \frac{U'(X_s^{\pi^\ast})}{X_s^{\pi^\ast} U''(X_s^{\pi^\ast})} (Z_s^i+\theta_s^i), \quad s \in [0,T], \quad i=1,\ldots,d_1,$$ where $Z_t:=\left(\frac{d\langle Y, W^1 \rangle_t}{dt},\ldots,\frac{d\langle Y, W^d \rangle_t}{dt}\right)$.
\end{theorem}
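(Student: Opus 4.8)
The plan is to follow the structure of the proof of Theorem~\ref{th:verif1}, the one structural change being that the \emph{multiplicative} wealth dynamics $dX_t^{\pi^\ast}=\pi_t^\ast X_t^{\pi^\ast}\,dS_t^\cH$ force us to replace the martingale $U'(X^{\pi^\ast}+Y)$ of the real-line case by $X^{\pi^\ast}U'(X^{\pi^\ast})\exp(Y)$. First I would construct $Y$: by (H4)(i) the process $\alpha_t:=\E\big[X_T^{\pi^\ast}U'(X_T^{\pi^\ast}+H)\mid\cF_t\big]$ is a square-integrable martingale, and it is strictly positive since $X^{\pi^\ast}=x\,\cE\!\big(\int_0^\cdot\pi_r^\ast\,dS_r^\cH\big)>0$ and $U'>0$. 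Hence $Y_t:=\log\alpha_t-\log\!\big(X_t^{\pi^\ast}U'(X_t^{\pi^\ast})\big)$ is a well-defined continuous adapted process, $X^{\pi^\ast}U'(X^{\pi^\ast})\exp(Y)=\alpha$ is a martingale by construction, and $Y_T=\log U'(X_T^{\pi^\ast}+H)-\log U'(X_T^{\pi^\ast})$. Writing $\alpha=\alpha_0+\int_0^\cdot\beta_s\,dW_s$ with $\beta\in\mathbb{H}^2(\real^d)$ via the martingale representation theorem, I would apply It\^o's formula to the identity $\log\alpha_t=\log X_t^{\pi^\ast}+\log U'(X_t^{\pi^\ast})+Y_t$ and compare the $dW$-coefficients to obtain, for $i=1,\dots,d_1$,
\[
\frac{\beta_t^i}{\alpha_t}=Z_t^i+\Big(1+\frac{X_t^{\pi^\ast}U''(X_t^{\pi^\ast})}{U'(X_t^{\pi^\ast})}\Big)\pi_t^{\ast^i},\qquad Z_t^i:=\frac{d\langle Y,W^i\rangle_t}{dt};
\]
the same computation simultaneously produces the BSDE satisfied by $Y$ that is needed for the later FBSDE reformulation.

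Next I would run the perturbation argument. For bounded predictable $\rho:[0,T]\to\real^{d_1}$ the strategy $\pi^\ast+\varepsilon\rho$ lies in $\Pi^x$, and since by (H4)(ii) the difference quotients $\tfrac1\varepsilon\big(U(X_T^{\pi^\ast+\varepsilon\rho}+H)-U(X_T^{\pi^\ast}+H)\big)=\tfrac1\varepsilon\big(X_T^{\pi^\ast+\varepsilon\rho}-X_T^{\pi^\ast}\big)\int_0^1U'(\cdots)\,dr$ are uniformly integrable (hence $L^1$-bounded), $\pi^\ast+\varepsilon\rho$ is in fact admissible for \eqref{eq:opti}. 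Optimality of $\pi^\ast$ then gives $l(\rho):=\lim_{\varepsilon\to0}\tfrac1\varepsilon\E\big[U(X_T^{\pi^\ast+\varepsilon\rho}+H)-U(X_T^{\pi^\ast}+H)\big]\le0$. Using the $L^2$-convergence $\tfrac1\varepsilon\big(X_T^{\pi^\ast+\varepsilon\rho}-X_T^{\pi^\ast}\big)\to\xi_T$ of (H4)(iii), the a.s.\ convergence along a subsequence of $\int_0^1U'(\cdots)\,dr$ to $U'(X_T^{\pi^\ast}+H)$ by continuity of $U'$, and the uniform integrability of (H4)(ii), I get $l(\rho)=\E\big[\xi_T\,U'(X_T^{\pi^\ast}+H)\big]\le0$, hence $=0$ upon replacing $\rho$ by $-\rho$. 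Here $\xi$ is the linear perturbation of (H4)(iii); a variation-of-constants check using $dX^{\pi^\ast}=\pi^\ast X^{\pi^\ast}\,dS^\cH$ shows $\xi=X^{\pi^\ast}M$ with $M_t=\int_0^t\rho_s\,dW_s^\cH+\int_0^t\rho_s\cdot(\theta_s^\cH-\pi_s^{\ast\cH})\,ds$, so that, using $X_T^{\pi^\ast}U'(X_T^{\pi^\ast}+H)=\alpha_T$, the optimality condition becomes $\E[\alpha_TM_T]=0$.

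Finally I would expand $\alpha_TM_T$ by integration by parts. The term $\int_0^\cdot M_s\,d\alpha_s$ is a genuine (not merely local) martingale by the Burkholder--Davis--Gundy and Cauchy--Schwarz inequalities, using $\E\big[\int_0^T|\beta_s|^2\,ds\big]=\E[\alpha_T^2]-\alpha_0^2<\infty$ together with $\E\big[\sup_s M_s^2\big]<\infty$ (which follows from boundedness of $\rho$ and $\theta$ and from $\pi^\ast\in\Pi^x$), exactly as in the proof of Theorem~\ref{th:verif1}; likewise $\int_0^\cdot\alpha_s\rho_s\,dW_s^\cH$ is a true martingale because $\E\big[\int_0^T\alpha_s^2|\rho_s|^2\,ds\big]<\infty$ by Doob's inequality. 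What survives is $0=\E[\alpha_TM_T]=\E\big[\int_0^T\rho_s\cdot\big(\alpha_s(\theta_s^\cH-\pi_s^{\ast\cH})+\beta_s^\cH\big)\,ds\big]$; choosing, as at the end of the proof of Theorem~\ref{th:verif1}, $\rho$ supported in its $i$-th component on $\{A^i>0\}$ and then on $\{A^i<0\}$, where $A_s^i:=\alpha_s(\theta_s^i-\pi_s^{\ast^i})+\beta_s^i$, yields $A^i=0$ $d\P\otimes dt$-a.e.\ for every $i\in\{1,\dots,d_1\}$, i.e.\ $\pi_t^{\ast^i}=\theta_t^i+\beta_t^i/\alpha_t$. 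Substituting the expression for $\beta^i/\alpha$ from the first paragraph cancels the $\pi_t^{\ast^i}$ term and leaves $0=\theta_t^i+Z_t^i+\tfrac{X_t^{\pi^\ast}U''(X_t^{\pi^\ast})}{U'(X_t^{\pi^\ast})}\pi_t^{\ast^i}$, i.e.\ $\pi_t^{\ast^i}=-\tfrac{U'(X_t^{\pi^\ast})}{X_t^{\pi^\ast}U''(X_t^{\pi^\ast})}(Z_t^i+\theta_t^i)$, which is the assertion.

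I expect the main obstacle to be the It\^o bookkeeping in the first step --- correctly expressing $\beta/\alpha$ in terms of $Z$, $\pi^\ast$ and the relative risk aversion $-\tfrac{xU''(x)}{U'(x)}$ while keeping the hedgeable ($\cH$) and orthogonal ($\cO$) coordinates separate --- together with the identification $\xi=X^{\pi^\ast}M$ and the verification, under (H4)(i) alone, that the stochastic integrals arising after the integration by parts are true martingales. The remaining steps are then essentially transcriptions of the corresponding parts of the proof of Theorem~\ref{th:verif1}.
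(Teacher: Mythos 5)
Your proposal is correct and follows essentially the same route as the paper: the same martingale $\alpha_t=\E[X_T^{\pi^\ast}U'(X_T^{\pi^\ast}+H)\mid\cF_t]$, the same definition $Y=\log\alpha-\log(X^{\pi^\ast}U'(X^{\pi^\ast}))$, the same relation between $\beta/\alpha$, $Z$ and $\pi^\ast$ via It\^o's formula, the same perturbation/variational argument with $\xi=X^{\pi^\ast}M$, and the same integration-by-parts and indicator-function steps to extract the first-order condition. The only cosmetic difference is that you substitute the expression for $\beta/\alpha$ at the very end rather than defining $Z$ through it at the outset, which changes nothing of substance.
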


\begin{proof}
As in the proof of Theorem \ref{th:verif1}, we prove the existence of $Y$ such that $X^{\pi^\ast} U'(X^{\pi^\ast})\exp(Y)$ is a martingale with $Y_T=\log(U'(X_T^{\pi^\ast}+H))-\log(U'(X_T^{\pi^\ast}))$. Consequently, $U'(X_T^{\pi^\ast}+H)=U'(X_T^{\pi^\ast})\exp(Y_T)$. By (H4), the process
\begin{align*}
\alpha_t&:=\E[X_T^{\pi^\ast} U'(X_T^{\pi^\ast}+H) \vert \mathcal{F}_t]
\end{align*}
is a square integrable martingale. In addition it is the unique solution to the BSDE
$$ \alpha_t=X_T^{\pi^\ast} U'(X_T^{\pi^\ast}+H) -\int_t^T \beta_s dW_s, \quad t \in [0,T],$$
where $\beta$ is a square integrable predictable process with values in $\real^d$. We set $Y:=\log(\alpha)-\log(U'(X^{\pi^\ast}))-\log(X^{\pi^\ast})$. As in the proof of Theorem \ref{th:verif1}, It\^o's formula implies that
\begin{align*}
Y_t&=Y_T-\int_t^T \left[\frac{\beta_s}{\alpha_s}- \frac{U''(X_s^{\pi^\ast})}{U'(X_s^{\pi^\ast})} X_s^{\pi^\ast} \pi_s^\ast- \pi_s^\ast \right] dW_s \\
& \quad \quad  - \int_t^T \bigg[-\frac12 \frac{|\beta_s|^2}{|\alpha_s|^2} -\left(\frac{U''(X_s^{\pi^\ast})}{U'(X_s^{\pi^\ast})} X_s^{\pi^\ast} \pi_s^\ast + \pi_s^\ast\right) \cdot \theta_s^{\mathcal{H}}\\
&\quad \quad \quad + \frac{|X_s^{\pi^\ast} \pi_s^\ast|^2}{2} \left( \left\vert\frac{U''(X_s^{\pi^\ast})}{U'(X_s^{\pi^\ast})}\right\vert^2 - \frac{U^{(3)}(X_s^{\pi^\ast})}{U'(X_s^{\pi^\ast})} \right) + \frac{|\pi_s^\ast|^2}{2} \bigg] ds.
\end{align*}
Setting
\begin{equation}
\label{eq:relabetqZ}
Z_t^i=\frac{\beta_t^i}{\alpha_t}-\frac{\pi_t^\ast}{U'(X_t^{\pi^\ast})} (X_t^{\pi^\ast} U''(X_t^{\pi^\ast})+U'(X_t^{\pi^\ast})), \quad i=1,\ldots,d,
\end{equation}
we get that
\begin{eqnarray*}
Y_t&=&Y_T-\int_t^T Z_s dW_s -\int_t^T \bigg[-\frac12 \frac{U^{(3)}(X_s^{\pi^\ast})}{U'(X_s^{\pi^\ast})} |X_s^{\pi^\ast} \pi_s^\ast|^2 - (Z_s^\cH+\theta_s^\cH) \cdot \left( \frac{U''(X_s^{\pi^\ast})}{U'(X_s^{\pi^\ast})} X_s^{\pi^\ast} \pi_s^\ast + \pi_s^\ast \right) \\
&& \quad \quad \quad - \frac{U''(X_s^{\pi^\ast})}{U'(X_s^{\pi^\ast})} X_s^{\pi^\ast} |\pi_s^\ast|^2 - \frac12 |Z_s|^2 \bigg] ds, \quad t\in [0,T].\end{eqnarray*}
We now derive the characterization of $\pi^\ast$ in terms of $U'$ and $Y$ and $Z$. We employ an argument put forth in \cite{Peng} and then substitute the Hamiltonian by a BSDE. Fix $\pi \in \Pi^x$. Since the latter is a convex set, for $\rho:=\pi-\pi^\ast$, the $\pi^\ast+\varepsilon \rho$ is an admissible strategy for every $\varepsilon \in (0,1)$. We have
\begin{align*}
&\frac{1}{\varepsilon}(U(X_T^{\pi^\ast+\varepsilon \rho}+H)-U(X_T^{\pi^\ast}+H)) = \\
&\hspace{2cm}\frac{1}{\varepsilon}(X_T^{\pi^\ast+\varepsilon \rho}-X_T^{\pi^\ast}) \int_0^1 U'(X_T^{\pi^\ast}+H+r (X_T^{\pi^\ast+\varepsilon \rho}-X_T^{\pi^\ast})) dr.
\end{align*}
Since $\pi^\ast$ is optimal we find
\begin{equation}
\label{eq:varhl1}
\E\left[\frac{1}{\varepsilon}(X_T^{\pi^\ast+\varepsilon \rho}-X_T^{\pi^\ast}) \int_0^1 U'(X_T^{\pi^\ast}+H+r (X_T^{\pi^\ast+\varepsilon \rho}-X_T^{\pi^\ast})) dr\right]\leq 0, \quad \forall \varepsilon>0.
\end{equation}
Now let $\xi$ be defined as
$$ d\xi_t=(\pi_t^\ast \xi_t + \rho_t X_t^{\pi^\ast}) dS_t^\cH, \quad t\in [0,T].$$
By (H4), we can apply Lebesgue's dominated convergence theorem in inequality \eqref{eq:varhl1} which, possibly passing to a subsequence, yields
\begin{align*}
&\E[ \xi_T U'(X_T^{\pi^\ast}+H) ] = \lim_{\varepsilon \to 0} \E\left[\frac{1}{\varepsilon}(X_T^{\pi^\ast+\varepsilon \rho}-X_T^{\pi^\ast}) \int_0^1 U'(X_T^{\pi^\ast}+H+r (X_T^{\pi^\ast+\varepsilon \rho}-X_T^{\pi^\ast})) dr \right].
\end{align*}
Combined with \eqref{eq:varhl1}, it leads to
\begin{equation}
\label{eq:temp1}
\E[ \xi_T (X_T^{\pi^\ast})^{-1} U'(X_T^{\pi^\ast}) X_T^{\pi^\ast} \exp(Y_T) ] = \E[ \xi_T U'(X_T^{\pi^\ast}+H) ] \leq 0, \quad \forall \pi \in \Pi^x.
\end{equation}
We now restrict consideration to a particular class of processes $\pi$, that is, we choose $\rho$ to be a bounded predictable process and we define $\pi:=\rho+\pi^\ast$ which is admissible strategy since it is square integrable. The integration by parts formula for continuous semimartingales implies that
$$ \xi_t (X_t^{\pi^\ast})^{-1}= \int_0^t \rho_s dW_s^\cH + \int_0^t [\rho_s \cdot \theta_s^\cH - \rho_s \cdot \pi_s^{\ast}] ds, \quad t \in [0,T].$$
Another application of integration by parts to $\alpha=U'(X^{\pi^\ast}) X^{\pi^\ast} \exp(Y)$ and $\xi (X^{\pi^\ast})^{-1}$ yields
\begin{align}
\label{eq:temp2}
\xi_T U'(X_T^{\pi^\ast}+Y_T)&=\xi_T (X_T^{\pi^\ast})^{-1} U'(X_T^{\pi^\ast}) X_T^{\pi^\ast} \exp(Y_T) \nonumber\\
&=\int_0^T \xi_t (X_t^{\pi^\ast})^{-1} d\alpha_t + \int_0^T \alpha_t \rho_t dW_t^{\mathcal{H}}\nonumber\\
&\quad + \int_0^T \rho_t \exp(Y_t) X_t^{\pi^\ast} \cdot (U'(X_t^{\pi^\ast}) (Z_t^\cH+\theta_t^\cH) + U''(X_t^{\pi^\ast}) X_t^{\pi^\ast} \pi_t^\ast) dt.
\end{align}
We now intend to take the expectation in the above relation. To this end, we need the following moment estimates. Using that $\rho$ is bounded, we have
\begin{align}
\label{eq:est1}
\E[\sup_{t\in [0,T]}|\xi_t (X_t^{\pi^\ast})^{-1}|^2]&=\E\left[\sup_{t\in [0,T]}\left\vert \int_0^t \rho_s dW_s^\cH +\int_0^t (\rho_s \cdot \theta_s^\cH -\rho_s \cdot \pi_s^\ast) ds \right\vert^2\right]\nonumber\\
&\leq C \E\left[\sup_{t\in [0,T]}\left\vert \int_0^t \rho_s dW_s^\cH \R|^2\R] + \E\left[\sup_{t\in [0,T]}\L| \int_0^t |\rho_s \cdot \theta_s^\cH -\rho_s \cdot \pi_s^\ast| ds \right\vert^2\right]\nonumber\\
&\leq C \L(\E\left[\int_0^T |\rho_s|^2 ds \right] + \E\left[\L|\int_0^T \rho_s \cdot \theta_s^\cH ds\R|^2\R] + \E\left[\L|\int_0^T \rho_s \cdot \pi_s^\ast ds\R|^2\R] \R)\nonumber\\
&\leq C\L(1+\E\L[\int_0^T |\pi^\ast_s|^2 ds \R]\R)<\infty,
\end{align}
where we have used Doob's inequality. Consequently, we get
$$\E[|\xi_T (X_T^{\pi^\ast})^{-1} \alpha_T|]\leq \E[|\alpha_T|^2]^{1/2} \E[||\xi_T (X_T^{\pi^\ast})^{-1}|^2]^{1/2} <\infty,$$
which follows from the Cauchy-Schwarz inequality. With $\rho$ being bounded, we get for some generic constant $C>0$
$$ \E\L[\int_0^T |\alpha_s \rho_s|^2 ds\R]\leq C \E\L[ \int_0^T |\alpha_s|^2 ds\R]<\infty. $$
Hence $\int_0^\cdot \alpha_t \rho_t dW_t^{\mathcal{H}}$ is a square integrable martingale.
Next, let $(\tau_n)_{n\geq 1}$ be a localizing sequence for the local martingale $ \int_0^\cdot \xi_t (X_t^{\pi^\ast})^{-1} d\alpha_t$. Then we have
$$ \left\vert \int_0^{\tau_n} \xi_t (X_t^{\pi^\ast})^{-1} d\alpha_t \right\vert \leq \sup_{t\in [0,T]} \left\vert \int_0^t \xi_t (X_t^{\pi^\ast})^{-1} d\alpha_t \right\vert.$$
To apply Lebesgue's dominated convergence theorem and show that $\E\L[\int_0^T \xi_t (X_t^{\pi^\ast})^{-1} d\alpha_t\R]=0$, we need to prove $\E\L[\sup_{t\in [0,T]} \left\vert \int_0^t \xi_t (X_t^{\pi^\ast})^{-1} d\alpha_t \right\vert\R]<\infty$:
\begin{align*}
\E\left[\sup_{t\in [0,T]} \left|\int_0^t \xi_t (X_t^{\pi^\ast})^{-1} d\alpha_t \right|\right]
&\leq C \E\left[\left|\int_0^T |\xi_t|^2 |(X_t^{\pi^\ast})^{-1}|^2 d\langle \alpha \rangle_t \right|^{1/2}\right]\\
&\leq C \E\left[\sup_{t\in [0,T]} |\xi_t|^2 |(X_t^{\pi^\ast})^{-1}|^2\right]^{1/2} \E\left[\langle \alpha \rangle_T \right]^{1/2}\\
& <\infty,
\end{align*}
where we have used the estimate \eqref{eq:est1}. Thus, by \eqref{eq:temp2} it follows that
$$ \E\left[\left| \int_0^T \rho_t \exp(Y_t) X_t^{\pi^\ast} \cdot (U'(X_t^{\pi^\ast}) (Z_t^\cH+\theta_t^\cH) + U''(X_t^{\pi^\ast}) X_t^{\pi^\ast} \pi_t^\ast) dt \right|\right]<\infty,$$
and from \eqref{eq:temp1}, it holds that for every $\pi$ in $\Pi^x$ such that $\rho$ is bounded, we get
$$ \E\left[\int_0^T \rho_t \exp(Y_t) X_t^{\pi^\ast} \cdot (U'(X_t^{\pi^\ast}) (Z_t^\cH+\theta_t^\cH) + U''(X_t^{\pi^\ast}) X_t^{\pi^\ast} \pi_t^\ast) dt \right] \leq 0. $$
Substituting $\rho$ with $-\rho$ in the previous inequality, we obtain for every $\rho$
\begin{equation}
\label{eq:finalpower}
\E\left[\int_0^T \rho_t \exp(Y_t) X_t^{\pi^\ast} \cdot (U'(X_t^{\pi^\ast}) (Z_t^\cH+\theta_t^\cH) + U''(X_t^{\pi^\ast}) X_t^{\pi^\ast} \pi_t^\ast) dt \right] = 0.
\end{equation}
Now let $A_t:=U'(X_t^{\pi^\ast}) (Z_t^\cH+\theta_t^\cH) + U''(X_t^{\pi^\ast}) X_t^{\pi^\ast} \pi_t^\ast$ and let $\rho_t(\omega):=\textbf{1}_{A_t(\omega)>0}$. Recall that we have $d\P\otimes dt$-a.s. $\exp(Y_t) X_t^{\pi^\ast}>0$. Plugging $\rho$ into \eqref{eq:finalpower} yields
$$ A_t(\omega) \leq 0, \; d\P\otimes dt-a.e. $$
Similarly choosing $\rho_t(\omega):=\textbf{1}_{A_t(\omega)<0}$, we find
$$ A_t(\omega) = 0, \; d\P\otimes dt-a.e. $$
Thus, we achieve
$$\pi_t^{\ast^i}=- \frac{U'(X_t^{\pi^\ast})}{X_t^{\pi^\ast} U''(X_t^{\pi^\ast})} (Z_t^i+\theta_t^i), \quad \forall t\in [0,T], \quad i=1,\ldots,d_1.$$
\end{proof}

Let us now deal with converse implication. 
\begin{theorem}
\label{th:conversehalfline}
Assume (H3) and (H5). Let $(X,Y,Z)$ be an adapted solution of the FBSDE

\begin{equation}
\label{eq:fbsde3}
\left\lbrace
\begin{array}{l}
X_t=x-\int_0^t \frac{U'(X_s)}{U''(X_s)} (Z_s^\cH+\theta_s^\cH) dW_s^\cH -\int_0^t \frac{U'(X_s)}{U''(X_s)} (Z_s^\cH+\theta_s^\cH) \theta_s ds,\\\\
Y_t=\log\left(\frac{U'(X_T+H)}{U'(X_T)}\right) - \int_t^T \left[ ( |Z_s^\cH+\theta_s^\cH|^2) \left( 1- \frac12 \frac{U^{(3)}(X_s) U'(X_s)}{|U''(X_s)|^2}\right) - \frac12 |Z_s|^2 \right] ds\\
\hspace{3.55cm}-\int_t^T Z_s dW_s
\end{array}
\right.
\end{equation}
such that $\E[|U(X_T^{\pi^\ast}+H)|]<\infty$, $Z$ is an element of $\mathbb{H}^2(\real^d)$ and the positive local martingale $XU'(X) \exp(Y)$ is a true martingale.
$$\pi^{\ast^i}_t:=-\frac{U'(X_s)}{X_s U''(X_s)} (Z_s^i+\theta_s^i), \quad s\in [0,T], \quad i=1,\ldots,d_1$$
is an optimal solution to the optimization problem \eqref{eq:opti}.
\end{theorem}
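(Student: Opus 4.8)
The plan is to follow the template of the proof of Theorem~\ref{th:converse}, adapted to the multiplicative wealth dynamics of this section, with $X^{\pi^\ast}$ playing the role of the num\'eraire. First I would record the elementary facts. By (H5) the risk tolerance $-U'(x)/(xU''(x))$ is bounded by $c$, so $|\pi^\ast_s|\le c\,|Z^\cH_s+\theta^\cH_s|$ and hence $\E[\int_0^T|\pi^\ast_s|^2\,ds]<\infty$ since $Z\in\mathbb{H}^2(\real^d)$ and $\theta$ is bounded; thus $\pi^\ast\in\Pi^x$. Moreover, after inserting the definition of $\pi^\ast$ the forward equation of \eqref{eq:fbsde3} becomes exactly $dX_t=\pi^\ast_tX_t\,dS^\cH_t$ with $X_0=x$, i.e.\ the self-financing wealth equation for $\pi^\ast$; hence $X=X^{\pi^\ast}$ and $X>0$, and by the terminal condition on $Y$ we get $\alpha_T:=X_TU'(X_T)\exp(Y_T)=X^{\pi^\ast}_TU'(X^{\pi^\ast}_T+H)$.

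Next I would use that $\alpha:=XU'(X)\exp(Y)$ is, by assumption, a strictly positive true martingale, so $\alpha=\alpha_0\,\mathcal{E}(L)$ with $\alpha_0=xU'(x)\exp(Y_0)$ and $L$ a continuous local martingale. Applying It\^o's formula to $\log\alpha=\log X+\log U'(X)+Y$, using the forward and backward parts of \eqref{eq:fbsde3} and the fact that the finite-variation part of $\log\alpha$ equals $-\frac12\langle L\rangle$ (as $\alpha$ is a local martingale), one identifies
$$ L=\log\alpha_0+\int_0^\cdot(\pi^\ast_s-\theta^\cH_s)\,dW^\cH_s+\int_0^\cdot Z^\cO_s\,dW^\cO_s. $$
Define $\mathbb{Q}\sim\mathbb{P}$ by $\frac{d\mathbb{Q}}{d\mathbb{P}}:=\frac{\alpha_T}{\alpha_0}=\frac{X^{\pi^\ast}_TU'(X^{\pi^\ast}_T+H)}{\E[X^{\pi^\ast}_TU'(X^{\pi^\ast}_T+H)]}$; Girsanov's theorem then shows that $\tilde W^\cH:=W^\cH-\int_0^\cdot(\pi^\ast_s-\theta^\cH_s)\,ds$ is a $\mathbb{Q}$-Brownian motion on the hedgeable coordinates. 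Now fix $\pi\in\Pi^x$ with $\E[|U(X^\pi_T+H)|]<\infty$ and put $R:=X^\pi/X^{\pi^\ast}$, which is strictly positive and continuous with $R_0=1$. The It\^o quotient rule gives $dR_t=R_t(\pi_t-\pi^\ast_t)\cdot dW^\cH_t+R_t(\pi_t-\pi^\ast_t)\cdot(\theta^\cH_t-\pi^\ast_t)\,dt=R_t(\pi_t-\pi^\ast_t)\cdot d\tilde W^\cH_t$; since $\pi,\pi^\ast\in\mathbb{H}^2(\real^d)$ and $R$ is locally bounded, $R$ is a nonnegative $\mathbb{Q}$-local martingale, hence a $\mathbb{Q}$-supermartingale, so $\E_{\mathbb{Q}}[R_T]\le R_0=1$.

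Finally I would combine these with the concavity of $U$: $U(X^\pi_T+H)-U(X^{\pi^\ast}_T+H)\le U'(X^{\pi^\ast}_T+H)(X^\pi_T-X^{\pi^\ast}_T)=\frac{\alpha_T}{X^{\pi^\ast}_T}(X^\pi_T-X^{\pi^\ast}_T)$. Taking $\mathbb{P}$-expectations, the left-hand side is integrable by hypothesis, while on the right-hand side $\E[\alpha_T]=\alpha_0<\infty$ and $\E[\frac{\alpha_T}{X^{\pi^\ast}_T}X^\pi_T]=\alpha_0\,\E_{\mathbb{Q}}[R_T]\le\alpha_0$ (the latter being the expectation of a nonnegative random variable, hence unambiguous), so that $\E[U'(X^{\pi^\ast}_T+H)(X^\pi_T-X^{\pi^\ast}_T)]=\alpha_0(\E_{\mathbb{Q}}[R_T]-1)\le0$. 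Therefore $\E[U(X^\pi_T+H)]\le\E[U(X^{\pi^\ast}_T+H)]$ for every admissible $\pi$ with finite expected utility, i.e.\ $\pi^\ast$ solves \eqref{eq:opti}. I expect the main obstacle to be the It\^o computation identifying $L$: one has to verify that the driver in \eqref{eq:fbsde3} is precisely the one forcing $\alpha$ to be a local martingale and that the induced Girsanov shift on the hedgeable coordinates is $\pi^\ast-\theta^\cH$ (rather than only $-\theta^\cH$ as in the additive setting of Theorem~\ref{th:converse}, the extra $\pi^\ast$ reflecting the change of num\'eraire). The remaining delicate point is the integrability bookkeeping in the last step, handled by the positivity of $U'(X^{\pi^\ast}_T+H)X^\pi_T$ together with the $\mathbb{Q}$-supermartingale property of $R$.
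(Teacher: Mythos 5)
Your argument is correct and is essentially the paper's own proof: the paper identifies $D:=U'(X)\exp(Y)=D_0\,\mathcal{E}(-\theta\cdot W^{\cH}+Z^{\cO}\cdot W^{\cO})$, notes that $DX^{\pi}$ is a nonnegative local martingale (hence a supermartingale) for every admissible $\pi$ while $DX^{\pi^\ast}=XU'(X)\exp(Y)$ is a true martingale by hypothesis, and concludes by concavity. Your passage to the measure $\Q$ with density $\alpha_T/\alpha_0$ and the $\Q$-supermartingale property of the ratio $R=X^{\pi}/X^{\pi^\ast}$ is just the Girsanov-transformed restatement of the same inequality $\E[D_TX_T^{\pi}]\le D_0x=\E[D_TX_T^{\pi^\ast}]$, so the two proofs coincide in substance.
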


\begin{proof}
We first note that $\pi^\ast \in \Pi^x$ since by the fact that $Z$ is in $\mathbb{H}^2(\real^d)$,
there is a constant $C>0$ such that
$$ \E\left[\int_0^T |\pi_t^\ast|^2 dt\right] \leq C~ \E\left[\int_0^T |Z_t^\cH+\theta_t^\cH|^2 dt\right]<\infty.$$
Now let $\pi$ be an element of $\Pi^x$. Let $D:=U'(X) \exp(Y)$.
Applying It\^o's formula and plugging in the expression of $\pi^\ast$, we find that
$$ dD_t = D_t (-\theta_t dW_t^\cH + Z_t dW_t^\cO), \quad D_0=U'(x) \exp(Y_0),$$
hence,
\begin{equation}
\label{eq:dual}
D_t = U'(x) \exp(Y_0) \mathcal{E}\left(-\int_0^\cdot \theta_s dW_s^\cH + \int_0^\cdot Z_s dW_s^{\cO}\right)_t, \quad t\in [0,T],
\end{equation}
which is a positive local martingale.
Now fix $\pi$ in $\Pi^x$. By definition of $X^\pi$ and of $D$, the product formula implies that $X^\pi D$ satisfies
$$ D X^{\pi} = x D_0 \cE((\pi-\theta) \cdot W^\cH + Z \cdot W^\cO). $$
Hence, $X^\pi D$ is a supermartingale and so $\E[D_T X_T^\pi] \leq D_0 x$.
By assumption, $X^{\pi^\ast} D=X U'(X) \exp(Y)$ is a true martingale so $\E[D_T X_T^{\pi^\ast}]=D_0 x$.
Finally combining the facts above, recalling that $D_T=U'(X_T^{\pi^\ast}+H)$ and using the concavity of $U$, 
we obtain
\begin{align}
\label{eq:reverproof2bis}
\E[U(X^{\pi}_T+H)-U(X^{\pi^\ast}_T+H)] &\leq \E[U'(X^{\pi^\ast}_T+H) (X^{\pi}_T-X^{\pi^\ast}_T)]\leq 0.
\end{align}
\end{proof}

\begin{remark}
In the previous proof, if we apply integration by parts formula to $D=U'(X) \exp(Y)$ and $X^\pi-X^{\pi^\ast}$, we get
\begin{equation*}
U'(X^{\pi})\exp(Y) (X^{\pi}-X^{\pi^\ast})=\int_0^\cdot (X_t^\pi-X_t^{\pi^\ast}) dD_t + \int_0^\cdot D_t (\pi_t X_t^\pi-\pi_t^\ast X_t^{\pi^\ast}) dW_t^{\cal{H}},
\end{equation*}
thus $U'(X^{\pi})\exp(Y) (X^{\pi}-X^{\pi^\ast})$ is a local martingale for every admissible strategy $\pi$.
\end{remark}

\begin{remark}
\label{rk:martgamma}
Note that using the regularity assumptions of the FBSDE \eqref{eq:fbsde3}, we derived that $D:=U'(X^{\pi^\ast}) \exp(Y)$ is a true martingale
$$ D_t = U'(x) \exp(Y_0) \mathcal{E}\left(-\theta \cdot W^\cH + Z^\cO \cdot W^\cO \right).$$
\end{remark}

\subsection{Characterization and verification: complete markets}

We adopt the setting and notations of Section \ref{section:halfline} with $d_1=d=1$ and $H=0$. In the complete case we can give sufficient conditions for the existence of a solution to the system \eqref{eq:fbsde3}. To this end, note the following remark.
\begin{remark}
Similar to Remark \ref{rk:martgamma}, we can use \eqref{eq:fbsde3} to characterize further the martingale $U'(X^{\pi^\ast})\exp(Y)$: applying It\^o's formula to $U'(X^{\pi^\ast}) \exp(Y)$ gives rise to
$$ U'(X_t^{\pi^\ast}) \exp(Y_t)=U'(x)\exp(Y_0) - \int_0^t U'(X_s) \exp(Y_s) \theta_s dW_s, $$
hence, we have
\begin{equation}
\label{eq:martcomp}
U'(X^{\pi^\ast}_t)\exp(Y_t)=U'(x) \exp(Y_0) \mathcal{E}(-\theta \cdot W)_t, \quad t \in [0,T].
\end{equation}
\end{remark}
This observation allows to prove the existence of \eqref{eq:fbsde3} under a condition on the risk aversion coefficient $-\frac{U''}{U'}$. Let $\varphi_1(x):=\frac{U'(x)}{U''(x)}$ and $\varphi_2(x):=1-\frac12 \frac{U^{(3)}(x) U'(x)}{|U''(x)|^2}$. We will now give sufficient condition for the system \eqref{eq:fbsde3} to exhibit a solution. We begin with the following remark.
\begin{remark}
Note that if $\varphi_2$ is constant then the system above decouples. An elementary analysis shows that this happens if and only is $U$ is the exponential, power, log or quadratic (mean-variance hedging) function. If $U(x)=-\exp(-\alpha_1 x)-\exp(-\alpha_2 x)$ then $\varphi_2$ is bounded and Lipschitz and if $U(x):=\frac{x^{\gamma_1}}{\gamma_1}+\frac{x^{\gamma_2}}{\gamma_2}$ then $\varphi_2$ is a bounded function.
\end{remark}
\begin{theorem}
Assume that $\varphi_2$ is a continuous bounded function. Then there exists an adapted solution $(X,Y,Z)$ in $\cS^2(\real^{d_1}) \times \cS^2(\real) \times \mathbb{H}^2(\real^d) $ to the FBSDE
\begin{equation}
\label{eq:fbsde3bis}
\left\lbrace
\begin{array}{l}
X_t=x-\int_0^t \frac{U'(X_s)}{U''(X_s)} (Z_s+\theta_s) dW_s -\int_0^t \frac{U'(X_s)}{U''(X_s)} (Z_s+\theta_s) \theta_s ds\\
\\
Y_t=0-\int_t^T Z_s dW_s-\int_t^T \left[ |Z_s+\theta_s|^2 \left( 1- \frac12 \frac{U^{(3)}(X_s) U'(X_s)}{|U''(X_s)|^2}\right) - \frac12 |Z_s|^2 \right] ds.
\end{array}
\right.
\end{equation}
Moreover, $\E[|U(X_T)|]<\infty$ and $\E[|U'(X_T)|^2]<\infty$.
\end{theorem}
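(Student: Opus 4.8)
The plan is to mimic the construction in Proposition~\ref{prop:exist1}: decouple the FBSDE \eqref{eq:fbsde3bis} by freezing the initial value $Y_0$, solve the forward equation for $X$ and the backward equation for $(Y,Z)$ separately, and then close the loop with a fixed-point argument in the one real parameter $Y_0=m$. First I would observe, exactly as in the half-line verification argument and Remark~(\ref{eq:martcomp}), that along any solution the process $U'(X)\exp(Y)$ is forced to equal $U'(x)\exp(Y_0)\,\mathcal E(-\theta\cdot W)$; equivalently, writing $\Gamma:=U'(X)\exp(Y)$, the pair $(X,\Gamma)$ satisfies a closed \emph{forward} system. Concretely, set $P:=\log\Gamma$; then using It\^o on $U'(X)\exp(Y)$ with the forward dynamics of $X$ and the backward driver of $Y$, the $ds$-terms cancel in precisely the way the expression for $\varphi_2$ was designed for, and $P$ solves an autonomous linear-in-noise SDE $dP_t=-\theta_t\,dW_t-\tfrac12|\theta_t|^2\,dt$ with $P_0=\log U'(x)+m$. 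So $\Gamma_t=U'(x)e^{m}\mathcal E(-\theta\cdot W)_t$ is known explicitly, and then $X_t=(U')^{-1}\!\big(\Gamma_t e^{-Y_t}\big)$. The remaining freedom is the scalar $m$.

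Next I would run the parametrized construction. For each $m\in\real$, define $P^m_t:=\log U'(x)+m-\int_0^t\theta_s\,dW_s-\tfrac12\int_0^t|\theta_s|^2\,ds$ (this is the forward SDE for $\log(U'(X)\exp(Y))$, which here is trivially solvable since $\theta$ is bounded), and then consider the BSDE for $(Y^m,Z^m)$ obtained from \eqref{eq:fbsde3bis} after substituting $X_s=(U')^{-1}(e^{P^m_s-Y^m_s})$ into the driver. The driver then has the form $g(s,Y^m_s,Z^m_s)=|Z^m_s+\theta_s|^2\varphi_2\big((U')^{-1}(e^{P^m_s-Y^m_s})\big)-\tfrac12|Z^m_s|^2$; since $\varphi_2$ is bounded by assumption, $g$ is of quadratic growth in $z$ with a bound $|g(s,y,z)|\le C(1+|z|^2)$ uniform in $(s,y,m,\omega)$, and the terminal condition is $0$, so by the standard theory of quadratic BSDEs with bounded terminal data (as cited, \cite[Theorem 2.5 and Lemma 3.1]{Morlais}) there is a unique solution with $Y^m$ bounded, $|Y^m_t|\le K$ with $K$ independent of $m$, and $Z^m\cdot W$ a BMO-martingale; in particular $Z^m\in\mathbb H^2(\real)$. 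Then I would set $X^m_t:=(U')^{-1}(e^{P^m_t-Y^m_t})$ and check directly (It\^o on $U'(X^m)=e^{P^m-Y^m}$) that $(X^m,Y^m,Z^m)$ solves \eqref{eq:fbsde3bis} \emph{except} that its forward initial condition reads $X^m_0=(U')^{-1}(U'(x)e^{m-Y^m_0})$, which equals $x$ precisely when $m=Y^m_0$. So it remains to find a fixed point $m^\ast=Y_0^{m^\ast}$.

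For the fixed point I would show $m\mapsto Y^m_0$ is continuous and takes values in the compact set $[-K,K]$, whence Brouwer (here just the intermediate value theorem, since everything is one-dimensional) gives $m^\ast\in[-K,K]$ with $Y^{m^\ast}_0=m^\ast$. Continuity follows the template in Proposition~\ref{prop:exist1}: for $m,m'$ write $\delta Y=Y^m-Y^{m'}$, $\delta Z=Z^m-Z^{m'}$; since $Z^m\cdot W$ and $Z^{m'}\cdot W$ are BMO-martingales one can linearize the quadratic driver and obtain, via the a priori estimates for quadratic/linear BSDEs, a bound $|\delta Y_0|^2\le C\,\E\big[\int_0^T |\varphi_2(X^m_s)-\varphi_2(X^{m'}_s)|^2\,ds\big]$ up to BMO-weighted factors; continuity of $m\mapsto P^m$ in $\cS^2$ (trivial, $P^m-P^{m'}=m-m'$) together with the fact that $(U')^{-1}$ composed with $\varphi_2$ maps bounded inputs to bounded outputs and, along the solution, varies continuously, then yields $|\delta Y_0|\to 0$ as $m'\to m$. (A mild technical point: $\varphi_2$ is only assumed bounded and continuous, not Lipschitz, so instead of a clean Gronwall one argues by uniform integrability / dominated convergence on the right-hand side, using that $P^m-P^{m'}\to 0$ uniformly and $\varphi_2$ is bounded.) Once $m^\ast$ is fixed, set $Y:=Y^{m^\ast}$, $Z:=Z^{m^\ast}$, $X:=X^{m^\ast}$; then $(X,Y,Z)\in\cS^2(\real)\times\cS^2(\real)\times\mathbb H^2(\real)$ (here $X\in\cS^2$ because $X=(U')^{-1}(e^{P^{m^\ast}-Y})$ with $P^{m^\ast}\in\cS^2$ and $Y$ bounded, arguing as in Proposition~\ref{prop:exist1}), and it solves \eqref{eq:fbsde3bis}. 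Finally, the integrability claims: $U'(X_T)=U'(x)e^{m^\ast}\mathcal E(-\theta\cdot W)_T$ has moments of all orders since $\theta$ is bounded, so $\E[|U'(X_T)|^2]<\infty$; and $\E[|U(X_T)|]<\infty$ follows from the concavity bounds $U(x)\le U'(0)x+U(0)$ and $-U(x)\le -U'(x)x-U(0)$ exactly as at the end of the proof of Proposition~\ref{prop:exist1}, together with $X\in\cS^2$ and $\E[|U'(X_T)X_T|]<\infty$ by Cauchy--Schwarz.

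The main obstacle I expect is the continuity of $m\mapsto Y_0^m$ under the weaker hypothesis that $\varphi_2$ is merely bounded and continuous (not Lipschitz), which rules out the direct Gronwall-type estimate used in Proposition~\ref{prop:exist1}; one has to combine the BMO bounds on $Z^m\cdot W$ with a dominated-convergence argument for the driver difference. A secondary subtlety is verifying that $X^m$ genuinely lands in the domain $\real^+$ of $U$ and stays there — this is automatic here because $X^m_t=(U')^{-1}(e^{P^m_t-Y^m_t})$ and $(U')^{-1}$ maps $(U'(\real^+))$ into $\real^+$ by construction — and checking that $\E[|U(X_T)|]<\infty$ and $\E[|U'(X_T)|^2]<\infty$, which, given the explicit martingale representation \eqref{eq:martcomp}, are routine.
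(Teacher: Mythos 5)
Your proposal is correct and follows essentially the same route as the paper: freeze $Y_0=m$, use the explicit martingale $U'(X)\exp(Y)=U'(x)e^{m}\mathcal{E}(-\theta\cdot W)$ to decouple the system, solve the resulting quadratic BSDE with bounded driver via Kobylanski-type results to get $|Y^m|\le K$ uniformly in $m$ and $Z^m\cdot W$ in BMO, and close with a fixed point $m^\ast=Y_0^{m^\ast}$ by continuity and boundedness of $m\mapsto Y_0^m$. You are also right to flag that the continuity of $m\mapsto Y_0^m$ is the delicate step under the mere boundedness and continuity of $\varphi_2$ (the paper defers to the Lipschitz-based estimate of its earlier proposition, which does not directly apply here), and your proposed BMO-plus-dominated-convergence fix is a reasonable way to fill that in.
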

\begin{proof}
Fix $m>0$ and consider the BSDE
\begin{align*}
Y_t^m &= 0 - \int_t^T \left[ |Z_s^m+\theta_s|^2 \varphi_2\left((U')^{-1}(U'(x) \exp(m) \mathcal{E}(-\theta \cdot W)_t \exp(-Y_t^m))\right) - \frac12 |Z_s^m|^2 ds \right]\\
&\qquad -\int_t^T Z_s^m dW_s.
\end{align*}
Since $\varphi_2$ is bounded, the driver of the BSDE above in $(Y^m,Z^m)$ can be bounded uniformly in $m$, hence \cite{Kobylanski} yields a pair $(Y^m,Z^m)\in \cS^2(\real) \times \mathbb{H}^2(\real)$ solution to this equation with $|Y^m| \leq C$ where $C$ does not depend on $m$ and $Z \cdot W$ is a BMO-martingale. In addition (once again using standard arguments like in the proof of Proposition \ref{prop:exist1}) we have that $m \mapsto Y_0^m$ is continuous. Thus there exists an element $m^\ast>0$ such that $Y_0^{m^\ast}=m^\ast$. Now applying It\^o's formula to
$$ X^{m^\ast}:=(U')^{-1}(U'(x) \exp(m^\ast) \mathcal{E}(-\theta \cdot W) \exp(-Y^m)),$$
we check that $(X^{m^\ast},Y^{m^\ast},Z^{m^\ast})$ satisfies \eqref{eq:fbsde3bis}. It remains to show that $\E[|U(X_T)|]<\infty$. From the concavity of $U$ we have that
$$ \E[|U(X_T)|] \leq |U'(0)| \E[|X_T|] + |U(0)| + \E[|U'(X_T) X_T|] + |U(0)|.$$
Since $X=x\mathcal{E}(-\frac{U'(X)}{X U''(X)} (Z+\theta)\cdot W)$, $-\frac{U'(x)}{xU''(x)} \leq \kappa$ for $x\in\real$ and $(Z+\theta) \cdot W$ is a BMO-martingale, $X$ is a true martingale, and thus $\E[X_T]=x$. Similarly we have that $X_T U'(X_T)=X_T U'(X_T) \exp(Y_T)=x U'(x) \exp(Y_0) \mathcal{E}((-\frac{U'(X)}{X U''(X)} (Z+\theta)-\theta)\cdot W)$ and so $X U'(X) \exp(Y)$ is a true martingale. This hence proves $\E[|X_T U'(X_T)|]<\infty$.
\end{proof}

\section{Links to other approaches}\label{section:link}

In this section we link our approach to characterizing optimal investment strategies to two other approaches based on the stochastic maximum principle and duality theory, respectively.

\subsection{Stochastic maximum principle}

\label{section:max}

This section links our approach in the complete market setting to the approach using the stochastic maximum principle. As we are interested only in the link, we will only give a formal derivation. In particular, we suppose here that $U$ and $U^{-1}$ are smooth enough with bounded derivatives. Let us consider the complete market case with $d_1=d=1$ for simplicity and $H=0$ and recall that in this setting, the wealth process is given by

$$ X_t^\pi=x+\int_0^t \pi_s dW_s+ \int_0^t \pi_s \theta_s ds, \quad t \in [0,T].$$
We consider $J(\pi):=\E[U(X_T^\pi)]$ and set $\tilde{X}_t^\pi:=U(X_t^\pi)$. It\^o's formula yields
$$ d\tilde{X}^\pi_t=U'(U^{-1}(\tilde{X}^\pi_t)) \pi_t dW_t + \Big[U'(U^{-1}(\tilde{X}^\pi_t)) \pi_t \theta_t +  \frac12 U''(U^{-1}(\tilde{X}^\pi_t)) |\pi_t|^2 \Big] dt $$
and $J(\pi)=\E[\tilde{X}^\pi_T]$. Applying the maximum principle technique described in \cite{Bismut78} (see also \cite[Section 4]{Peng}), we introduce the adjoint equation to get
\begin{equation}
\label{eq:Peng1}
\left\lbrace
\begin{array}{l}
d\tilde{X}^\pi_t=U'(U^{-1}(\tilde{X}^\pi_t)) \pi_t dW_t + \Big[ U'(U^{-1}(\tilde{X}^\pi_t)) \pi_t \theta_t +  \frac12 U''(U^{-1}(\tilde{X}^\pi_t)) |\pi_t|^2 \Big] dt, \; \tilde{X}^\pi_0=U(x),\\
-dp_t= \Big[ \left( \frac{U''}{U'}(U^{-1}(\tilde{X}^\pi_t)) \theta_t \pi_t + \frac12 \frac{U^{(3)}}{U''}(U^{-1}(\tilde{X}^\pi_t)) |\pi_t|^2 \right) p_t + k_t \frac{U''}{U'}(U^{-1}(\tilde{X}^\pi_t)) \pi_t \Big] dt +  k_t dW_t, \; p_T=1.
\end{array}
\right.
\end{equation}
We now introduce the corresponding Hamiltonian, defined as
$$ H(t,p,k,\pi):=p[U'(U^{-1}(\tilde{X}^\pi_t)) \pi_t \theta_t + \frac12 U''(U^{-1}(\tilde{X}^\pi_t)) |\pi_t|^2] + k U'(U^{-1}(\tilde{X}^\pi_t)) \pi_t.$$ A formal maximization gives
$$ \pi^\ast_t:=-\frac{U'}{U''}(U^{-1}(\tilde{X}^\pi_t))\left[\frac{k_t}{p_t}+\theta_t\right].$$
Plugging this into \eqref{eq:Peng1} yields
\begin{equation}
\label{eq:Peng2}
\left\lbrace
\begin{array}{l}
d\tilde{X}^\pi_t=-\frac{|U'|^2}{U''}(U^{-1}(\tilde{X}^\pi_t))\left(\frac{k_t}{p_t}+\theta_t\right) \left[dW_t-\frac12 \left(\frac{k_t}{p_t}-\theta_t\right) dt\right], \; \tilde{X}^\pi_0=U(x),\\
dp_t= -\left(\frac{k_t}{p_t}+\theta_t\right)^2 p_t \left[-1+ \frac12 \frac{U^{(3)} U'}{|U''|^2}(U^{-1}(\tilde{X}^\pi_t))\right] dt + k_t dW_t, \; p_T=1
\end{array}
\right.
\end{equation}
We now relate this system with \eqref{eq:fbsde3bis} using a Cole-Hopf type transformation. First we plug $\pi^\ast$ into \eqref{eq:Peng2} and obtain
\begin{equation}
\label{eq:Peng3}
\left\lbrace
\begin{array}{l}
dX^{\pi^\ast}_t=-\frac{U'}{U''}(X^{\pi^\ast}_t)\left[\frac{k_t}{p_t}+\theta_t\right] (dW_t+\theta dt), \; X^{\pi^\ast}_0=x,\\
dp_t= -\left(\frac{k_t}{p_t}+\theta_t\right)^2 p_t \left[-1+ \frac12 \frac{U^{(3) U'}}{|U''|^2}(X_t^{\pi^\ast})\right] dt + k_t dW_t, \; p_T=1.
\end{array}
\right.
\end{equation}
Next consider the system
\begin{equation}
\label{eq:Peng4}
\left\lbrace
\begin{array}{l}
dX^{\pi^\ast}_t=-\frac{U'}{U''}(X^{\pi^\ast}_t) (Z_t+\theta_t) (dW_t+\theta dt), \; X^{\pi^\ast}_0=x,\\
dY_t= \left[ (Z_t+\theta_t)^2 (1-\frac12 \frac{U^{(3)}(X_t^{\pi^\ast}) U'(X_t^{\pi^\ast})}{|U''|^2(X_t^{\pi^\ast})}) -\frac12 |Z_t|^2 \right] dt + Z_t dW_t, \; Y_T=0.
\end{array}
\right.
\end{equation}
Setting $\tilde{p}_T:=\exp(Y_t)$, $\tilde{k}:=Z \tilde{p}$ and $\tilde{X}:=X$, It\^o's formula implies that $(\tilde{p},\tilde{k})$ is a solution to \eqref{eq:Peng3}.

%%%%%%%%%%%%%%%%%%%%%%%%%%%%%%%%%%%%%%%%%%%%
%%%%%% Duality theory %%%%%%%%%%%%%%%%%%%%%%
%%%%%%%%%%%%%%%%%%%%%%%%%%%%%%%%%%%%%%%%%%%%

\subsection{BSDE solution via convex duality methods}
\label{section:duality}

Let us now turn to a very important link of our approach with the convex duality theory.
We have seen in Sections \ref{section:realline} and \ref{section:halfline} that our approach relies on choosing a process $Y$ such that the quantities $U'(X^{\pi^\ast}+Y)$ and $X^{\pi^\ast} U'(X^{\pi^\ast}) \exp(Y)$, respectively, are martingales. In fact, these martingales are not any martingales. For instance in case of a utility function on the whole real line, $U'(X^{\pi^\ast}+Y)$ is exactly $U'(x+Y_0) \mathcal{E}(-\theta \cdot W^\cH + \frac{U''}{U'}(X^{\pi^\ast} +Y) Z^\cO \cdot W^\cO)$. So in the complete case it is exactly the martingale under which the price is itself a martingale. For utility functions defined on the positive half line this leads directly to duality theory, since it is known from the original paper by Kramkov and Schachermayer (\cite{KramkovSchachermayer}) that (under some growth-type condition on $U$) the optimal wealth process $X^{\pi^\ast}$ and the stochastic process $Y^\ast$ solution to the so-called dual-problem are such that the stochastic process $X^{\pi^\ast} Y^\ast$ is a martingale. In addition, with our notations, Kramkov and Schachermayer prove that $Y^\ast$ has the form $Y^\ast= Y^\ast_0 \mathcal{E}(-\theta \cdot W^\cH + M)$ where $M$ is a martingale orthogonal to $W^\cH$. Recall that in our case $X^{\pi^\ast} U'(X^{\pi^\ast}) \exp(Y)$ is a martingale and from \eqref{eq:dual}, we have proved that $D:=U'(X^{\pi^\ast}) \exp(Y)$ is exactly of the form $D_0 \mathcal{E}(-\theta \cdot W^\cH + Z^\cO \cdot W^\cO)$, in other words $Y^\ast=D$ and the $Z^\cO$ component appearing in the solution of our FBSDE exactly represents the orthogonal part in the dual optimizer of Kramkov and Schachermayer theory. Obviously, this needs to be derived more formally. This is the goal of this section.\\\\
\noindent
The aim of this section is to derive a solution of the forward-backward equation \eqref{eq:fbsde3bis} by means of the results from the convex duality approach to \eqref{eq:opti}. We denote by $\Pi^1$ the set of admissible strategies with initial capital one unit of currency. In the case of zero endowment $H=0$, the solution to the concave optimization problem \eqref{eq:opti} is achieved by formulating and solving the following dual problem: denoting the convex conjugate of the concave function $U$ by
\begin{align*}
V(y) := \sup_{x>0} \big\{ U(x) - xy \big\}, \quad y>0,
\end{align*}
where $d X^\pi_t = X^\pi_t \pi_t \frac{d \tilde{S}_t}{\tilde{S}_t}, ~ X^\pi_0 = x > 0,$
and defining a family of nonnegative semimartingales via
\begin{align*}
\cY := \big\{ Y \geq 0: Y_0 = 1, \text{ $X^\pi Y$ is a supermartingale for every }\pi \in \Pi^1 \big\},
\end{align*}
the primal problem \eqref{eq:opti} is solved by solving instead the dual convex optimization problem
\begin{align}
\label{eq:dual_problem1}
v(y) = \inf_{Y_T \in \cY} \E \big[ V(yY_T) \big], \quad y > 0.
\end{align}
If this dual problem admits a unique solution $Y^\ast_T \in \cY$, then the primal problem \eqref{eq:opti} with $H=0$ also yields a unique solution
\begin{align*}
X^{\pi^\ast}_T &= x + \int_0^T X^{\pi^\ast}_s \pi^\ast_s  \frac{d\tilde{S}_s}{\tilde{S}_s}\\
&= x + \int_0^T \alpha^\ast_s dS_s \\
&= I(yY^\ast_T), 
\end{align*}
with the corresponding optimal control $\pi^\ast = \frac{\alpha^\ast \tilde{S}}{X^{\pi^\ast}}$. Here we have $I = (U')^{-1}$ and $x = - v'(y)$\footnote{This is equivalent to $u'(x) = y$ where $u(x) = \sup_\pi \E\big[ U(X^\pi_T + H) \big]$. The differentiability of both $v(y)$ and $u(x)$ are shown in \cite{CvitanicSchachermayerWang}.}. The case of bounded terminal endowment $H$ is dealt with in \cite{CvitanicSchachermayerWang}, where instead of \eqref{eq:dual_problem1} the following dual problem is considered
\begin{align*}
v(y) = \inf_{Y_T \in \cY} \E \big[ V(yY_T) + y Y_T H \big] , \quad y > 0.\end{align*}
The case of general integrable $H$ has been studied in \cite{HugonnierKramkov}, using the original dual problem \eqref{eq:dual_problem1} but a slight different choice of the domain $\cY$. A ubiquitous property of the convex duality method is that once the primal and the dual optimizers are obtained, their product $X^{\pi^\ast} Y^\ast$ is a nonnegative true martingale (hence uniformly integrable), see \cite{KramkovSchachermayer} for a economic interpretation.
In the context of utility maximization with bounded random endowments, this martingale property of $X^{\pi^\ast} Y^\ast$ is pointed out in \cite[Remark 4.6]{CvitanicSchachermayerWang}.
This martingale property of $X^{\pi^\ast} Y^\ast$ constitutes the first main ingredient for deriving a solution for the forward-backward equation \eqref{eq:fbsde3bis}. A second main ingredient is constituted by the characterization of the dual domain $\cY$. Note in the continuous process setting, $\cY$ is the family of all non-negative supermartingales (see e.g. \cite{KramkovSchachermayer,HugonnierKramkov}). According to a well known result, every nonnegative c\`adl\`ag supermartingale $Y \in \cY$ admits a unique multiplicative decomposition $$Y = A M$$ where $A$ is a predictable, non-increasing process such that $A_0 = 1$ and $M$ is c\`adl\`ag local martingale. However, \cite{LarsenZitkovic} characterize the elements of $Y \in \cY$ by the multiplicative decomposition
\begin{align}
\label{eq:LarsenZitkovic}
Y = A \cE(-\theta^\cH \cdot W^\cH + K \cdot W^\cO),
\end{align}
where $A$ is a predictable non-increasing process such that $A_0 = 1$ and $K \in \mathbb{H}^2_{loc}(\real^{d_2})$ (see \cite[Proposition 3.2]{LarsenZitkovic}). Using that the Fenchel-Legendre transform $V$ is strictly decreasing, \cite[Corollary 3.3]{LarsenZitkovic} shows that the dual optimizer is a (continuous) local martingale and admits the representation
\begin{align}
\label{eq:LarsenZitkovic2}
Y^\ast = \cE \big( - \theta^\cH \cdot W^\cH + K^\ast \cdot W^\cO \big)
\end{align}
for a uniquely determined $K^\ast \in \mathbb{H}^2_{loc}(\real^{d_2})$. If $v(y) = \E \Big[ V (yY^\ast_T) \Big] < \infty$, then we can check that the optimal $K^\ast$ actually belongs to $\mathbb{H}^2(\real^{d_2})$. This is done in the following lemma whose proof is in the same spirit as in \cite[Lemma 3.2]{Larsen}
%%%%%%%%%%%%%%%%%%%%%%%%%%%%%%
%%%%%%%%%% lemma %%%%%%%%%%%%%
%%%%%%%%%%%%%%%%%%%%%%%%%%%%%%
\begin{lemma}
\label{lemma:Z2_H2}
If for some $y>0$, it holds that
\begin{align*}
{v}(y) &= \inf_{\nu\in \mathbb{H}_{loc}^2(\real^{d_2})}\E \Big[ {V} \Big( y \cE\big( -\theta^\cH \cdot W^\cH + \nu \cdot W^\cO \big) \Big)  \Big] < \infty,
\end{align*}
we have
\begin{align*}
{v}(y) &= \inf_{\nu\in\mathbb{H}^2(\real^{d_2})}\E \Big[ {V} \Big( y \cE\big( -\theta^\cH \cdot W^\cH + \nu \cdot W^\cO \big) \Big)  \Big],
\end{align*}
i.e. the optimal $K^\ast$ minimizing $v(y)$ can be assumed to belong to $\mathbb{H}^2(\real^{d_2})$.
\end{lemma}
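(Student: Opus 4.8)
\noindent\emph{Proof strategy (in the spirit of \cite[Lemma 3.2]{Larsen}).} The inequality ``$\ge$'' is immediate, since $\mathbb{H}^2(\real^{d_2})\subseteq\mathbb{H}_{loc}^2(\real^{d_2})$ and enlarging the set over which one minimises can only lower the infimum. For the reverse inequality it suffices to prove that the minimiser $K^\ast\in\mathbb{H}_{loc}^2(\real^{d_2})$ of \eqref{eq:LarsenZitkovic2} in fact belongs to $\mathbb{H}^2(\real^{d_2})$; then $\inf_{\nu\in\mathbb{H}^2(\real^{d_2})}\E[V(y\cE(-\theta^\cH\cdot W^\cH+\nu\cdot W^\cO)_T)]\le\E[V(yY^\ast_T)]=v(y)$, and the infimum is attained in $\mathbb{H}^2(\real^{d_2})$. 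So put $Y^\ast=\cE(-\theta^\cH\cdot W^\cH+K^\ast\cdot W^\cO)$ and choose $\tau_n:=\inf\{t:\int_0^t|K^\ast_s|^2\,ds\ge n\}\wedge T$, so that $\tau_n\uparrow T$ almost surely, $K^\ast\textbf{1}_{[0,\tau_n]}\in\mathbb{H}^2(\real^{d_2})$, and $(Y^\ast)^{\tau_n}$ is a true (indeed square integrable) martingale. Applying It\^o's formula to $\log Y^\ast$ and stopping at $\tau_n$ gives
\[
\log Y^\ast_{\tau_n}=\int_0^{\tau_n}\bigl(-\theta^\cH_s\,dW^\cH_s+K^\ast_s\,dW^\cO_s\bigr)-\tfrac12\int_0^{\tau_n}\bigl(|\theta^\cH_s|^2+|K^\ast_s|^2\bigr)\,ds,
\]
and because the integrands are square integrable on $[0,\tau_n]$ the stochastic integral has zero expectation, whence
\[
\E\Bigl[\int_0^{\tau_n}|K^\ast_s|^2\,ds\Bigr]=-2\,\E[\log Y^\ast_{\tau_n}]-\E\Bigl[\int_0^{\tau_n}|\theta^\cH_s|^2\,ds\Bigr]\le-2\,\E[\log Y^\ast_{\tau_n}].
\]
Everything therefore reduces to a bound on $-\E[\log Y^\ast_{\tau_n}]$ that is uniform in $n$: once this is available, monotone convergence yields $\E[\int_0^T|K^\ast_s|^2\,ds]<\infty$, i.e.\ $K^\ast\in\mathbb{H}^2(\real^{d_2})$.

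For the uniform bound I would pass to the primal side of the duality. Recall that $X^{\pi^\ast}Y^\ast$ is a nonnegative uniformly integrable martingale with $\E[X^{\pi^\ast}_tY^\ast_t]=x$, that $X^{\pi^\ast}_T=I(yY^\ast_T)$, and that $\pi^\ast\in\mathbb{H}^2(\real^{d_1})$. On the one hand $\E[(\log(X^{\pi^\ast}_TY^\ast_T))^+]\le\E[X^{\pi^\ast}_TY^\ast_T]=x<\infty$; on the other hand $v(y)<\infty$ forces $\E[(\log(X^{\pi^\ast}_TY^\ast_T))^-]<\infty$ --- via the identity $X^{\pi^\ast}_TY^\ast_T=\tfrac1yX^{\pi^\ast}_TU'(X^{\pi^\ast}_T)$, the growth assumptions on $U$, and the supermartingale estimate $\E[Y^\ast_T]\le1$ --- so that $\log(X^{\pi^\ast}_TY^\ast_T)\in L^1$. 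Optional sampling of the uniformly integrable martingale $X^{\pi^\ast}Y^\ast$ at $\tau_n$, combined with conditional Jensen applied to the concave function $\log$, then gives
\[
\E[\log(X^{\pi^\ast}_{\tau_n}Y^\ast_{\tau_n})]=\E\bigl[\log\E[X^{\pi^\ast}_TY^\ast_T\mid\cF_{\tau_n}]\bigr]\ge\E[\log(X^{\pi^\ast}_TY^\ast_T)]=:-C_1.
\]
Moreover It\^o's formula for $\log X^{\pi^\ast}$ together with the pointwise inequality $\pi^\ast_s\cdot\theta^\cH_s-\tfrac12|\pi^\ast_s|^2\le\tfrac12|\theta^\cH_s|^2$ yields $\log X^{\pi^\ast}_{\tau_n}\le\log x+\int_0^{\tau_n}\pi^\ast_s\cdot dW^\cH_s+\tfrac12\int_0^{\tau_n}|\theta^\cH_s|^2\,ds$; since $\pi^\ast\in\mathbb{H}^2(\real^{d_1})$ the stochastic integral has zero expectation and $\theta$ is bounded, so $\E[\log X^{\pi^\ast}_{\tau_n}]\le\log x+\tfrac12\|\theta\|_\infty^2T=:C_2$. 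Subtracting the last two displays, $\E[\log Y^\ast_{\tau_n}]=\E[\log(X^{\pi^\ast}_{\tau_n}Y^\ast_{\tau_n})]-\E[\log X^{\pi^\ast}_{\tau_n}]\ge-C_1-C_2$ for all $n$, which is the bound sought above.

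The main obstacle is precisely this uniform lower bound on $\E[\log Y^\ast_{\tau_n}]$. As $\log Y^\ast$ is a priori only a local supermartingale and $Y^\ast$ may come arbitrarily close to $0$, one cannot pass to the limit in $n$ naively; for a generic $\nu\in\mathbb{H}_{loc}^2(\real^{d_2})$ with $\E[V(y\cE(-\theta^\cH\cdot W^\cH+\nu\cdot W^\cO)_T)]<\infty$ the assertion is false, so optimality must be used --- it enters through the martingale property of $X^{\pi^\ast}Y^\ast$. The finiteness of $v(y)$ (which controls $\E[(\log(X^{\pi^\ast}_TY^\ast_T))^-]$), together with the integrability supplied by $\pi^\ast\in\mathbb{H}^2(\real^{d_1})$ and by the boundedness of $\theta$ (the finite moments of $\cE(-\theta\cdot W)$ noted in Section~\ref{section:prelim}), are thus essential. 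The step $\E[(\log(X^{\pi^\ast}_TY^\ast_T))^-]<\infty$ is immediate for the power utility to which the lemma is applied below: there $X^{\pi^\ast}_TY^\ast_T$ is a positive multiple of $V(yY^\ast_T)=\tfrac{1-\gamma}{\gamma}(yY^\ast_T)^{-\gamma/(1-\gamma)}$, so that $\log(X^{\pi^\ast}_TY^\ast_T)=c'-\tfrac{\gamma}{1-\gamma}\log Y^\ast_T$ for a constant $c'$ and $(\log(X^{\pi^\ast}_TY^\ast_T))^-\le C(1+Y^\ast_T)$; the general case follows from the standing hypotheses on $U$.
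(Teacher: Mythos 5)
Your reduction --- show $K^\ast\in\mathbb{H}^2(\real^{d_2})$, stop at $\tau_n$, and bound $\E[\int_0^{\tau_n}|K^\ast_s|^2ds]$ by $-2\E[\log Y^\ast_{\tau_n}]$ --- starts out parallel to the paper, but you then cross over to the primal side, and that is where a genuine gap appears. To get $\E[\log X^{\pi^\ast}_{\tau_n}]\le\log x+\frac12\|\theta\|_\infty^2T$ you kill the stochastic integral $\int_0^{\tau_n}\pi^\ast_s\,dW^\cH_s$ in expectation by invoking $\pi^\ast\in\mathbb{H}^2(\real^{d_1})$. But square integrability of the optimal strategy is precisely what is \emph{not} available in this setting: the end of Section \ref{section:power} states explicitly that duality only gives existence of $\pi^\ast$ in a space much larger than $\Pi^x$, and this very lemma is one of the ingredients (via Theorem \ref{thm:H2} and Theorem \ref{th:powergeneral}) in the attempt to prove that $\pi^\ast$ is square integrable. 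As written, your argument is therefore circular. A secondary gap: the claim $\E[(\log(X^{\pi^\ast}_TY^\ast_T))^-]<\infty$ is only sketched for power utility, whereas the lemma is used for general $U$ in Theorem \ref{thm:H2}; "the growth assumptions on $U$" are not standing hypotheses here and this step would need a real proof. (The first issue is repairable without $\pi^\ast\in\mathbb{H}^2$: use that $X^{\pi^\ast}\cE(-\theta^\cH\cdot W^\cH)$ is a nonnegative local martingale, hence a supermartingale, and apply Jensen to $\E[\log(X^{\pi^\ast}_{\tau_n}\cE(-\theta^\cH\cdot W^\cH)_{\tau_n})]\le\log x$; boundedness of $\theta$ then gives the same bound on $\E[\log X^{\pi^\ast}_{\tau_n}]$.)

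The paper's proof avoids the primal problem entirely and is worth comparing against. With essentially your stopping times it writes $v(y)=\E[V(y\cE_T(-\theta^\cH\cdot W^\cH+K^\ast\cdot W^\cO))]\ge\E[V(y\cE_{\tau^n}(\cdots))]$ and then applies Jensen again to obtain $v(y)\ge V\big(y\exp(-\frac12\E[\int_0^{\tau^n}(|\theta^\cH_s|^2+|K^\ast_s|^2)\,ds])\big)$; letting $n\to\infty$ by monotone convergence, the finiteness of $v(y)$ together with $V(0)=U(\infty)=\infty$ forces $\E[\int_0^T(|\theta^\cH_s|^2+|K^\ast_s|^2)\,ds]<\infty$. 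Note that this argument uses nothing about $K^\ast$ beyond the finiteness of $\E[V(yY^\ast_T)]$, so your remark that the conclusion is ``false for a generic $\nu$ with finite dual value'' is not consistent with the mechanism the paper actually exploits: it is the blow-up of $V$ at zero, not the optimality of $K^\ast$ or the martingale property of $X^{\pi^\ast}Y^\ast$, that does the work.
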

%%%%%%%%%%%%%%%%%%%%%%%%%%%%%%%
%%%%%%%% proof %%%%%%%%%%%%%%%%
%%%%%%%%%%%%%%%%%%%%%%%%%%%%%%%
\begin{proof}
We introduce the family of stopping times
\begin{align*}
\tau^n := \inf \big\{ t>0: \int_0^t \big(|\theta^\cH_s|^2 + |K^\ast_s|^2 \big) ds \geq n \big\}, ~ n \in \N.
\end{align*}
Let $y>0$, then we have
\begin{align*}
{v}(y) &= \E \Big[ {V} \Big( y \cE_T\big( -\theta^\cH \cdot W^\cH + K^\ast \cdot W^\cO \big) \Big) \Big]\\
&= \E \left[ \E \Big[ {V} \Big( y\cE_T\big( -\theta^\cH \cdot W^\cH + K^\ast \cdot W^\cO \big) \Big) \big| \cF_{\tau^n}\Big] \right]\\
&\geq \E \Big[ {V} \Big( y \cE_{\tau^n}\big( -\theta^\cH \cdot W^\cH + K^\ast \cdot W^\cO \big) \Big) \Big],
\end{align*}
where the last line follows by Jensen's inequality. Continuing the last line and recalling that ${V}(y)$ is a strictly convex function, we have
\begin{align*}
{v}(y) &\geq \E \Big[ {V} \Big( y \exp\big( \int_0^{\tau^n} \big( -\theta^\cH_s dW^\cH_s + K^\ast_s dW^\cO_s \big) \big) \exp\big( -\frac12 \int_0^{\tau^n} \big( |\theta^\cH_s|^2 + |K^\ast_s|^2 \big) ds \big)  \Big)\Big]\\
&\geq {V} \left( \E \Big[ y \exp\big( \int_0^{\tau^n} \big( -\theta^\cH_s dW^\cH_s + K^\ast_s dW^\cO_s \big) \big) \exp\big( -\frac12 \int_0^{\tau^n} \big( |\theta^\cH_s|^2 + |K^\ast_s|^2 \big) ds \big) \Big] \right)\\
&\geq {V} \left( y\exp\Big( \E \Big[ -\frac12 \int_0^{\tau^n} \big( |\theta^\cH_s|^2 + |K^\ast_s|^2 \big) ds \Big] \Big) \right),
\end{align*}
where Jensen's inequality has been used twice. By continuity of ${V}$ and of the exponential function, it follows from the monotone convergence theorem that
\begin{align*}
{v}(y) &\geq \lim_{n\to\infty} {V} \Big( \exp\Big(  -\frac12 \E\Big[ \int_0^{\tau^n} \big( |\theta^\cH_s|^2 + |K^\ast_s|^2 \big) ds \Big] \Big) \Big)\\
&= {V} \Big( \exp\Big(  -\frac12 \E\Big[ \int_0^T \big( |\theta^\cH_s|^2 + |K^\ast_s|^2 \big) ds \Big] \Big) \Big).
\end{align*}
Since ${v}(y) < \infty$ and ${V}\big(\exp(-\infty) \big) = {V}(0) = U(\infty) = \infty$, it follows that
\begin{align*}
\E \Big[ \int_0^T \big( |\theta^\cH_s|^2 + |K^\ast_s|^2\big) ds \Big] < \infty.
\end{align*}
We deduce that $K^\ast \in \mathbb{H}^2(\real^{d_2})$.
\end{proof}
%%%%%%%%%%%%%%%%%%%%%%%%%%%%%%%
%%%%%%%%% end proof %%%%%%%%%%%
%%%%%%%%%%%%%%%%%%%%%%%%%%%%%%%

Now using that $X^{\pi^\ast} Y^\ast$ is a true martingale and that the dual optimizer $Y^\ast$ is a local martingale satisfying \eqref{eq:LarsenZitkovic2}, we get the following result.
%%%%%%%%%%%%%%%%%%%%%%%%%%%%%%%
%%%%%% theorem %%%%%%%%%%%%%%%%
%%%%%%%%%%%%%%%%%%%%%%%%%%%%%%%
\begin{theorem}
\label{thm:H2}
Let $H$ be a non-negative bounded random endowment and assume that the coefficient of relative risk aversion $- \frac{xU^{''}(x)}{U^{'}(x)}$ satisfies
\begin{align}
\label{eq:rel_risk_aversion}
\limsup_{x\to\infty} \left( - \frac{xU^{''}(x)}{U^{'}(x)} \right) < \infty.
\end{align}
Then there exists $x_0 >0$ such that for all $x>x_0$ the coupled FBSDE \eqref{eq:fbsde3} has a solution $(X,Y,Z)$ such that $X_0 =x$. In addition, $X$ is the optimal wealth of the problem \eqref{eq:opti} and the dual optimizer $Y^\ast$ associated with it is given by $Y^\ast=U'(X) \exp(Y)$ (so that $y Y_T^\ast=U'(X_T+H)$).
\end{theorem}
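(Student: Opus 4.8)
The plan is \emph{not} to construct the solution by decoupling the system, as in the complete-market results above, but to read it off from the primal and dual optimizers of the utility maximization problem provided by classical convex duality, the bridge being the first-order relation $U'(X_T^{\pi^\ast}+H)=yY_T^\ast$. Concretely, for $y>0$ in the range that corresponds, via $x=-v'(y)$, to $x>x_0$, I would invoke Kramkov--Schachermayer \cite{KramkovSchachermayer} together with its bounded-endowment refinement by Cvitani\'c--Schachermayer--Wang \cite{CvitanicSchachermayerWang} (or Hugonnier--Kramkov \cite{HugonnierKramkov}) to obtain a primal optimizer $X^{\pi^\ast}$ --- a strictly positive wealth process --- and a dual optimizer $Y^\ast\in\cY$, $Y_0^\ast=1$, such that $X^{\pi^\ast}Y^\ast$ is a true (hence uniformly integrable) martingale and $U'(X_T^{\pi^\ast}+H)=yY_T^\ast$; the threshold $x_0$ is picked so that in addition $v(y)<\infty$, and it is precisely the relative-risk-aversion bound \eqref{eq:rel_risk_aversion} that secures this finiteness for $y$ small, equivalently $x$ large. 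By the result of Larsen and \v{Z}itkovi\'c recalled in \eqref{eq:LarsenZitkovic2}, $Y^\ast=\cE(-\theta^\cH\cdot W^\cH+K^\ast\cdot W^\cO)$ for some $K^\ast\in\mathbb{H}^2_{loc}(\real^{d_2})$, and since $v(y)<\infty$, Lemma \ref{lemma:Z2_H2} upgrades this to $K^\ast\in\mathbb{H}^2(\real^{d_2})$.

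I would then set $X:=X^{\pi^\ast}$, $D:=yY^\ast$ (a positive local martingale with $D_T=U'(X_T+H)$), $Y:=\log D-\log U'(X)$, and let $Z$ be defined by \eqref{eq:relabetqZ} relative to the true martingale $\alpha:=X^{\pi^\ast}D=\E\bigl[X_T^{\pi^\ast}U'(X_T^{\pi^\ast}+H)\mid\cF_\cdot\bigr]$ and its representing integrand $\beta$; a short computation then gives $Z^\cO=K^\ast$ and $Z^\cH=-\theta^\cH-\frac{U''(X)}{U'(X)}X^{\pi^\ast}\pi^\ast$, with $\pi^\ast$ the optimal strategy. The terminal condition of \eqref{eq:fbsde3} is immediate from $D_T=U'(X_T+H)$; the It\^o computation in the proof of Theorem \ref{theorem:positive_opt_bsde}, which only uses that $\alpha$ is a martingale, shows that $Y$ solves the backward equation of \eqref{eq:fbsde3}; and rearranging the formula for $Z^\cH$ to $X^{\pi^\ast}\pi^\ast=-\frac{U'(X)}{U''(X)}(Z^\cH+\theta^\cH)$ and substituting into $dX_t^{\pi^\ast}=X_t^{\pi^\ast}\pi_t^\ast(dW_t^\cH+\theta_t^\cH\,dt)$ yields the forward equation. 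Finally, $XU'(X)\exp(Y)=X^{\pi^\ast}D$ is a true martingale by duality, $\E[Y_T^\ast]\le1$ and the concavity estimates for $U$ used in the earlier proofs give $\E[|U(X_T+H)|]<\infty$, and by construction $X$ is the optimal wealth with $Y^\ast=U'(X)\exp(Y)$ up to the normalization of $Y^\ast$.

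\textbf{Main obstacle.} The delicate point, and the reason for the hypothesis \eqref{eq:rel_risk_aversion} and the threshold $x_0$, is to prove $Z^\cH\in\mathbb{H}^2(\real^{d_1})$ (and hence $\pi^\ast\in\Pi^x$). One has $Z^\cH+\theta^\cH=\bigl(-X^{\pi^\ast}U''(X^{\pi^\ast})/U'(X^{\pi^\ast})\bigr)\pi^\ast$, where the factor in parentheses is the relative risk aversion evaluated along the optimal wealth, which \eqref{eq:rel_risk_aversion} controls only for large argument. The substantive work is to show that for $x$ large the optimal wealth $X^{\pi^\ast}$ stays bounded below by a positive constant on $[0,T]$ --- one expects to use that $X^{\pi^\ast}D$ is a true martingale while $X_T^{\pi^\ast}+H=I(yY_T^\ast)$ is large when $y$ is small --- so that the relative risk aversion is bounded along the path and therefore $\E\bigl[\int_0^T|Z_s^\cH|^2\,ds\bigr]\le C\bigl(1+\E\bigl[\int_0^T|\pi_s^\ast|^2\,ds\bigr]\bigr)<\infty$, the right-hand side being finite since $\pi^\ast$ is admissible. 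Combined with $Z^\cO=K^\ast\in\mathbb{H}^2(\real^{d_2})$ from Lemma \ref{lemma:Z2_H2}, this gives $Z\in\mathbb{H}^2(\real^d)$ and completes the proof.
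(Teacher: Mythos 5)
Your construction is essentially identical to the paper's proof: take the primal and dual optimizers from convex duality, use the Larsen--\v{Z}itkovi\'c representation $Y^\ast=\cE(-\theta^\cH\cdot W^\cH+K^\ast\cdot W^\cO)$ and Lemma \ref{lemma:Z2_H2}, set $Y=\log(yY^\ast)-\log U'(X^\ast)$, read off $Z^\cO=K^\ast$ and $Z^\cH=-\theta^\cH-\frac{U''(X^\ast)}{U'(X^\ast)}\pi^\ast X^\ast$, and verify \eqref{eq:fbsde3} by It\^o's formula; this part is correct. One caveat: the theorem only asserts existence of an adapted solution, not $Z\in\mathbb{H}^2(\real^d)$, so your ``main obstacle'' concerns a stronger statement than what is claimed --- indeed the paper explicitly leaves the square integrability of $Z^\cH$ open (see the discussion after Theorem \ref{th:powergeneral}) --- and your sketch of it (a uniform positive lower bound on $X^{\pi^\ast}$ for large $x$) is asserted rather than proved, so it should not be presented as completing the argument.
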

%%%%%%%%%%%%%%%%%%%%%%%%%%%%%%%
%%%%%% begin proof %%%%%%%%%%%%
%%%%%%%%%%%%%%%%%%%%%%%%%%%%%%%
\begin{proof}
The existence of $x_0> 0$ such that for every $x > x_0$ the quantity
\begin{align*}
u(x) = \sup_{\pi \in \Pi^x} \E \big[ U\big( X^\pi_T + H \big) \big] = \E \big[ U\big( X^{\pi^\ast}_T + H \big) \big]
\end{align*}
is finite has been shown \cite{CvitanicSchachermayerWang}. We set $X^\ast := X^{\pi^\ast}$. Also recall that we have $y = u'(x)>0$ for $x>x_0$ and that we have $$ \E \Big[ y X^\ast_T Y^\ast_T \Big] = xy. $$ 
Moreover, $y Y_T^\ast=U'(X_T^\ast+H)$. We define the true martingale $\alpha:=y X^\ast Y^\ast$.
We set $Y:=\log(\alpha) - \log(X^\ast) - \log(U'(X^\ast))$. We have that
\begin{align*}
Y_t&=\log\left(\frac{\alpha_t}{X_t^\ast U'(X_t^\ast)}\right)\\
&= \log\left(\frac{y Y_t^\ast}{U'(X_t^\ast)}\right)\\
&= \log(y) + \log(Y_t^\ast)-\log(U'(X_t^\ast).
\end{align*}
Recall that by definition of $X^\ast$ and $Y^\ast$ we have that
$$ dY_t^\ast= Y_t^\ast \left( - \theta_t^\H dW_t^\H + K_t^\ast dW_t^\mathcal{O} \right) $$
and
$$ d X_t^\ast = X_t^\ast \left( \pi_t^\ast dW_t^\H + \pi_t^\ast \theta_t^\H dt \right).$$
Hence
\begin{align*}
dY_t &= -\theta_t^\H dW_t^\H + K_t^\ast dW_t^\mathcal{O} -\frac12 (|\theta_t^\H|^2+|K_t^\ast|^2) dt\\
&\quad - \frac{U''(X_t^\ast)}{U'(X_t^\ast)} (\pi_t^\ast X_t^\ast dW_t^\H + \pi_t^\ast X_t^\ast \theta_t^\H dt)\\
&\quad - \frac12 \frac{U^{(3)}(X_t^\ast)U'(X_t^\ast)-(U''(X_t^\ast))^2}{(U'(X_t^\ast))^2} |\pi_t^\ast X_t^\ast |^2 dt.  
\end{align*}
We define:
$$ Z_t^\H := -\theta_t^\H - \frac{U''(X_t^\ast)}{U'(X_t^\ast)} \pi_t^\ast X_t^\ast,$$
so that $\pi_t^\ast X_t^\ast = -(Z_t^\H+\theta_t^\H) \frac{U'(X_t^\ast)}{U''(X_t^\ast)}$, and
$$ Z_t^\mathcal{O}:= K_t^\ast.$$
Then
\begin{align*}
dY_t &= Z_t^\H dW_t^\H + Z_t^\mathcal{O} dW_t^\mathcal{O} -\frac12 (|\theta_t^\H|^2+|K_t^\ast|^2) dt\\
&\quad + \left[ \theta_t^\H (Z_t^\H+\theta_t^\H) -\frac12 \frac{U^{(3)}(X_t^\ast)U'(X_t^\ast)-(U''(X_t^\ast))^2}{(U'(X_t^\ast))^2} \frac{|Z_t^\H+\theta_t^\H|^2 |U'(X_t^\ast)|^2}{|U''(X_t^\ast)|^2} \right] dt\\
&= Z_t^\H dW_t^\H + Z_t^\mathcal{O} dW_t^\mathcal{O} + \left[ |Z_t^\H+\theta_t^\H|^2 \left(1-\frac12 \frac{U^{(3)}(X_t^\ast)U'(X_t^\ast)}{|U''(X_t^\ast)|^2}\right)-\frac12 |Z_t^\H|^2 \right] dt.
\end{align*}
Finally note that by construction $Y_T=\log\left(\frac{U'(X_T^\ast+H)}{U'(X_T\ast)}\right)$.
Hence, $(X,Y,Z) = (X^\ast,Y,Z)$ is a solution to \eqref{eq:fbsde3} and $$ y Y^\ast = U'(X) \exp(Y). $$
\end{proof}
%%%%%%%%%%%%%%%%%%%%%%%%%%%%%%%
%%%%%%% end proof %%%%%%%%%%%%%
%%%%%%%%%%%%%%%%%%%%%%%%%%%%%%%

Let us recall that the \textit{absolute risk aversion} of $U(x)$ is defined as $ARA(x) := - \frac{U^{''}(x)}{U^{'}(x)}$ and the \textit{risk tolerance} as $\frac{1}{ARA(x)}$. We say that $U(x)$ has \textit{hyperbolic absolute risk aversion (HARA)} if and only if its risk tolerance $\frac{1}{ARA(x)}$ is linear in $x$. More precisely, it can be shown that a utility function $U(x)$ is \textit{HARA} if and only if $$U(x)= \frac{1-\gamma}{\gamma}\Big( \frac{a x}{1-\gamma} + b \Big)^\gamma, ~~ \frac{a x}{1-\gamma} + b >0,$$ for given real numbers $\gamma, a, b \in \IR$.
%%%%%%%%%%%%%%%%%%%%%%%%%%%%%%%%
%%%%%% corollary %%%%%%%%%%%%%%%
%%%%%%%%%%%%%%%%%%%%%%%%%%%%%%%%
\begin{corollary}
Assume that $U(x)$ is \textit{HARA}. Then there exists a constant $\kappa \in \IR$ such that the backward equation from \eqref{eq:fbsde3} can be written as
\begin{align}
\label{eq:temp002}
Y_t &= \log\Big( \frac{U^{'}(X^\ast_T + H)}{U^{'}(X^\ast_T)} \Big) - \int_t^T Z_s dW_s - \int_t^T \Big( -\frac12 |Z_s|^2 + \kappa |Z^\cH_s + \theta^\cH_s|^2 \Big) ds\\
&= \log\Big( \frac{U'(X^\ast_T + H)}{U^{'}(X^\ast_T)} \Big) - \int_t^T Z_s dW_s - \int_t^T g(s,Z_s) ds.\nonumber
\end{align}
\end{corollary}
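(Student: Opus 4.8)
The plan is to reduce the statement to the elementary observation that, for a HARA utility, the scale‑invariant ratio $\dfrac{U^{(3)}(x)\,U'(x)}{(U''(x))^{2}}$ does not depend on $x$. Granting this, the coefficient $1-\tfrac12\,\dfrac{U^{(3)}(X_s)U'(X_s)}{|U''(X_s)|^{2}}$ multiplying $|Z_s^{\cH}+\theta_s^{\cH}|^{2}$ in the driver of the backward component of \eqref{eq:fbsde3} collapses to a constant, which we call $\kappa$. The terminal condition $\log\!\big(U'(X^\ast_T+H)/U'(X^\ast_T)\big)$ and the martingale term $\int Z\,dW$ are unaffected, so the asserted form \eqref{eq:temp002} is obtained merely by renaming this constant and setting $g(s,z):=-\tfrac12|z|^{2}+\kappa\,|z^{\cH}+\theta_s^{\cH}|^{2}$. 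There is nothing else to verify.

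For the computation itself, I would start from the HARA form $U(x)=\frac{1-\gamma}{\gamma}\big(\frac{ax}{1-\gamma}+b\big)^{\gamma}$ recalled above and abbreviate $w=w(x):=\frac{ax}{1-\gamma}+b$, so that $w'=\frac{a}{1-\gamma}$. Successive differentiation gives
\[
U'(x)=a\,w^{\gamma-1},\qquad U''(x)=-a^{2}\,w^{\gamma-2},\qquad U^{(3)}(x)=a^{3}\,\frac{\gamma-2}{\gamma-1}\,w^{\gamma-3},
\]
whence
\[
\frac{U^{(3)}(x)\,U'(x)}{(U''(x))^{2}}=\frac{a^{4}\,\frac{\gamma-2}{\gamma-1}\,w^{2\gamma-4}}{a^{4}\,w^{2\gamma-4}}=\frac{\gamma-2}{\gamma-1},
\]
which is indeed independent of $x$. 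Consequently one may take
\[
\kappa:=1-\frac12\cdot\frac{\gamma-2}{\gamma-1}=\frac{\gamma}{2(\gamma-1)},
\]
and the driver of the backward part of \eqref{eq:fbsde3} equals precisely $-\tfrac12|Z_s|^{2}+\kappa\,|Z_s^{\cH}+\theta_s^{\cH}|^{2}$, as claimed.

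Since the argument is a direct verification, no genuine difficulty arises; the only point deserving a word of care is the degeneracy of the closed form when $\gamma\in\{0,1\}$ or $a=0$. These parameter regimes correspond to the logarithmic, linear and exponential limiting utilities, for which — as already noted in Section~\ref{section:halfline}, where $\varphi_2(x)=1-\tfrac12\frac{U^{(3)}(x)U'(x)}{|U''(x)|^{2}}$ is observed to be constant precisely for the exponential, power, logarithmic and quadratic functions — the same conclusion holds with the corresponding value of $\kappa$ (a direct check gives $\kappa=0$ in the logarithmic case and $\kappa=\tfrac12$ in the exponential case). Hence the corollary holds throughout the HARA class.
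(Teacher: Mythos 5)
Your proof is correct and follows essentially the same route as the paper: both arguments rest on the single observation that for HARA utilities the coefficient $1-\tfrac12\,U^{(3)}(x)U'(x)/|U''(x)|^{2}$ is constant, the paper deducing this from the linearity of the risk tolerance $f(x)=-U'(x)/U''(x)$ (so that $f'\equiv c$ and $\kappa=\tfrac12-\tfrac12 c$), while you verify it by direct differentiation of the parametric form. Your explicit value $\kappa=\frac{\gamma}{2(\gamma-1)}$ is consistent with the paper's $\kappa$ and with the power-utility system \eqref{eq:powerincomp}, so nothing is missing.
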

%%%%%%%%%%%%%%%%%%%%%%%%%%%%%%%%%
%%%%%%%%%%% proof %%%%%%%%%%%%%%%
%%%%%%%%%%%%%%%%%%%%%%%%%%%%%%%%%
\begin{proof}
Notice that for the risk tolerance $$f(x):= \frac{1}{ARA(x)} = -\frac{U^{'}(x)}{U^{''}(x)}$$ it holds that $$ f^{'}(x) = -1 + \frac{U^{'}(x) U^{(3)}(x)}{| U^{''}(x) |^2}.$$ Since $U(x)$ being \textit{HARA} implies that $f$ is linear in $x$, it follows that there exist constants $c,d \in \IR$ such that $f^{'}(x) = c x + d$. Hence the BSDE from \eqref{eq:fbsde3} can also be written as
\begin{align*}
Y_t &= \log\Big( \frac{U^{'}(X^\ast_T + H)}{U^{'}(X^\ast_T)} \Big) - \int_t^T Z_s dW_s - \int_t^T \Big( -\frac12 |Z_s|^2 + \big( \frac12 -\frac12 f^{'}(X^\ast_s) \big)|Z^\cH_s + \theta^\cH_s|^2 \Big) ds\\
&= \log\Big( \frac{U^{'}(X^\ast_T + H)}{U^{'}(X^\ast_T)} \Big) - \int_t^T Z_s dW_s - \int_t^T \Big( -\frac12 |Z_s|^2 + \kappa |Z^\cH_s + \theta^\cH_s|^2 \Big) ds,
\end{align*}
for $\kappa = \frac12 - \frac12 c$.
\end{proof}
%%%%%%%%%%%%%%%%%%%%%%%%%%%%%%%%%
%%%%%%% end proof %%%%%%%%%%%%%%%
%%%%%%%%%%%%%%%%%%%%%%%%%%%%%%%%%
Obviously the driver of the BSDE \eqref{eq:temp002}, $g(s,z)$, satisfies the quadratic growth condition
\begin{align*}
|g(s,z)| \leq \alpha + \frac\gamma2 |z|^2
\end{align*}
for suitably chosen real numbers $\alpha, \gamma >0$. In this setting  \cite[Theorem 2]{BriandHu1} yields the following result.
%%%%%%%%%%%%%%%%%%%%%%%%%%%%%%%%%
%%%%%%% corollary %%%%%%%%%%%%%%%
%%%%%%%%%%%%%%%%%%%%%%%%%%%%%%%%%
\begin{corollary}\label{corollary:BriandHu}
If $\xi = \log\Big( \frac{U^{'}(X^\ast_T + H)}{U^{'}(X^\ast_T)} \Big)$ satisfies $\E \Big[ e^{\gamma |\xi|} \Big] < \infty$, then the BSDE \eqref{eq:temp002} admits a solution $(Y,Z)$ such that $Y$ is continuous and $Z\in \mathbb{H}^2_{loc}(\real^d)$.
\end{corollary}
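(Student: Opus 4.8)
The plan is to recognize \eqref{eq:temp002} as a quadratic BSDE with an unbounded terminal condition of exactly the type treated in \cite{BriandHu1}, and then to conclude by invoking \cite[Theorem 2]{BriandHu1}. No new stochastic analysis is needed here: the entire content of the proof is the verification that the hypotheses of that theorem --- progressive measurability and continuity of the generator, quadratic growth, and exponential integrability of the terminal value --- are met in the present situation.

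First I would describe the data entering \eqref{eq:temp002}. By Theorem \ref{thm:H2} the optimal wealth $X^\ast=X^{\pi^\ast}$ is a fixed strictly positive adapted process and $H\geq 0$ is bounded and $\cF_T$-measurable; since $U$ is strictly increasing on $\real^+$ its derivative $U'$ is strictly positive there, so $\xi=\log\bigl(U'(X^\ast_T+H)/U'(X^\ast_T)\bigr)$ is a well-defined $\cF_T$-measurable random variable. Because $U$ is \textit{HARA}, the risk tolerance $-U'/U''$ is affine and hence has constant derivative, so the generator
$$ g(s,z)=-\tfrac12|z|^2+\kappa\,|z^\cH+\theta^\cH_s|^2 $$
appearing in \eqref{eq:temp002} does not involve $X^\ast$ at all: it is a smooth (hence continuous) function of $z$, progressively measurable in $s$ through $\theta^\cH$ alone, and independent of $y$. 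Since $\theta$ is bounded, $g$ satisfies the quadratic growth bound $|g(s,z)|\leq \alpha+\tfrac{\gamma}{2}|z|^2$ recorded just before the statement, for suitable constants $\alpha,\gamma>0$.

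Next I would match this with the hypotheses of \cite[Theorem 2]{BriandHu1}. Because the generator here has no $|y|$-growth (the relevant growth coefficient in $y$ is zero), the exponential-moment condition of that theorem reduces, with $\alpha$ constant, to $\E\bigl[\exp\bigl(\gamma(|\xi|+\alpha T)\bigr)\bigr]=e^{\gamma\alpha T}\,\E\bigl[e^{\gamma|\xi|}\bigr]<\infty$, which is exactly the hypothesis of the corollary. All the assumptions of \cite[Theorem 2]{BriandHu1} being satisfied, it delivers a pair $(Y,Z)$ solving \eqref{eq:temp002} with $Y$ continuous and $Z\in\mathbb{H}^2_{loc}(\real^d)$; since only the $\gamma$-th exponential moment of $\xi$ is assumed, one cannot in general push $Z$ beyond $\mathbb{H}^2_{loc}$, so this is precisely the asserted conclusion. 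The one place requiring a little care, and the closest thing to an obstacle, is the bookkeeping of constants: one must check that the exponent $\gamma$ controlling the quadratic growth of $g$ is no larger than the exponent for which $e^{\gamma|\xi|}$ is integrable, since \cite{BriandHu1} needs the exponential-moment exponent to dominate the growth constant. Here this is automatic, because the corollary is deliberately phrased with a single $\gamma$ playing both roles.
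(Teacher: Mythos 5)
Your proposal is correct and follows exactly the route the paper takes: the paper itself gives no separate proof, simply observing that the driver $g(s,z)$ of \eqref{eq:temp002} satisfies the quadratic growth bound $|g(s,z)|\leq \alpha+\frac{\gamma}{2}|z|^2$ and that \cite[Theorem 2]{BriandHu1} then yields the existence of a continuous $Y$ and $Z\in\mathbb{H}^2_{loc}(\real^d)$ under the stated exponential moment condition on $\xi$. Your additional verification of the hypotheses (measurability, continuity in $z$, no $y$-dependence, and the matching of the exponent $\gamma$) is a faithful elaboration of that same argument.
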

%%%%%%%%%%%%%%%%%%%%%%%%%%%%%%%%%
%%% end corollary %%%%%%%%%%%%%%%
%%%%%%%%%%%%%%%%%%%%%%%%%%%%%%%%%

\subsection{The power case with general endowment}
\label{section:power}

We finally deal with an open question in mathematical Finance namely the case of power utility with general endowment. We know from duality theory that an optimal solution exists but we would like to prove that the strategy is smooth (i.e. square integrable) and to characterize it in terms of the solution to an equation (for instance a FBSDE). We will use definitions and notations of Section \ref{section:halfline}. Let $U(x):=\frac{x^\gamma}{\gamma}$ with $\gamma$ a fixed parameter in $(0,1)$. Let $H$ be a positive bounded $\mathcal{F}_T$-measurable random variable where we recall that $(\mathcal{F}_t)_{t\in [0,T]}$ is the filtration generated by $W=(W^\cH,W^\cO)$. We recall that we denote by $\Pi^x$ the set of admissible strategies with initial capital $x$ which is now defined by
\begin{equation}
\label{eq:admissiblebis}
\Pi^x:=\L\{ \pi:\Omega \times [0,T] \to \real^{d_1}, \; \pi \mbox{ is predictable}, \;  \E\L[\int_0^T |\pi_s|^2 ds\R]<\infty \R\}
\end{equation}
where $\pi^i, i=1,\ldots,d_1$ denotes the proportion of wealth invested in the stock. The associated wealth process is given by
$$ X_t^\pi:=x+\int_0^t \pi_s X_s^{\pi} dS_s^{\mathcal{H}}, \quad t\in [0,T].$$
Again, we extend $\pi$ to $\real^d$ via $\tilde{\pi}:=(\pi^1,\ldots,\pi^{d_1},0,\ldots,0)$ and make the convention that we write $\pi$ instead of $\tilde{\pi}$.
Thus, we have
$$ X_t^\pi=x \mathcal{E}\left(\int_0^\cdot \pi_r dS_r^{\mathcal{H}} \right)_t, \quad t \in [0,T].$$
Note that this setting covers the case of a purely orthogonal endowment of the form $H:=\phi(S_T^\cO)$ where $\phi$ is positive. Now we can go in the analysis of the problem:
\begin{equation}
\label{eq:optipowerincomstrat}
\sup_{\pi \in \Pi^x} \E\left[\frac{(X_T^\pi+H)^\gamma}{\gamma}\right].
\end{equation}
Indeed, what is only known in that case is that an optimal strategy exists (\cite{HugonnierKramkov}) but in a much larger space that $\Pi^x$, in particular it is not proved that the optimal strategy is square integrable. About the characterization of this optimal strategy one can write the Hamilton-Jacobi-Bellman PDE in the Markovian case but no results allow us to solve it. We believe that combining the duality theory, BSDEs techniques and our approach we could show first that the optimal strategy belongs to the space $\Pi^x$ and that we will give a characterization of it in terms of a FBSDE. Let us be more precise.
\begin{theorem}
\label{th:powergeneral}
There exists $x_0>0$ such that for every $x>x_0$, the system
\begin{equation}
\label{eq:powerincomp}
\left\lbrace
\begin{array}{l}
X_t=x+\int_0^t  \frac{X_s (Z_s^\cH+\theta_s^\cH)}{1-\gamma} dW_s^\cH+\int_0^t \theta_s^\cH \frac{X_s (Z_s^\cH+\theta_s^\cH)}{1-\gamma} ds \\
\\
Y_t=(\gamma-1) \log\left(1+\frac{H}{X_T}\right)-\int_t^T Z_s dW_s -\int_t^T \left(\frac{\gamma}{2 (\gamma-1)} |Z_s^\cH+\theta_s^\cH|^2 - \frac12 |Z_s|^2 \right) ds
\end{array}
\right.
\end{equation}
admits an adapted solution $(X,Y,Z)$. If in addition $Z^\cH=(Z^1,\ldots,Z^{d_1})$ is in $\mathbb{H}^2(\real^{d_1})$, then
\begin{equation}
\label{eq:optipower}
{\pi^\ast}^i:=\frac{1}{1-\gamma} (Z^i+\theta^i), \; i=1,\ldots,d_1
\end{equation}
is the optimal solution to the maximization problem \eqref{eq:optipowerincomstrat}.
\end{theorem}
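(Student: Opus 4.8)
The plan is to recognise the system \eqref{eq:powerincomp} as the specialisation of the general FBSDE \eqref{eq:fbsde3} to the power utility $U(x)=x^\gamma/\gamma$, $\gamma\in(0,1)$, and then to obtain existence from the duality-based Theorem~\ref{thm:H2} and optimality from the converse verification Theorem~\ref{th:conversehalfline}. First I would record the elementary identities $U'(x)=x^{\gamma-1}$, $U''(x)=(\gamma-1)x^{\gamma-2}$, $U^{(3)}(x)=(\gamma-1)(\gamma-2)x^{\gamma-3}$, which give
$$ -\frac{U'(x)}{xU''(x)}=\frac{1}{1-\gamma},\qquad \frac{U'(x)}{U''(x)}=\frac{x}{\gamma-1},\qquad 1-\frac12\,\frac{U^{(3)}(x)U'(x)}{|U''(x)|^2}=\frac{\gamma}{2(\gamma-1)}, $$
together with $\log\big(U'(X_T+H)/U'(X_T)\big)=(\gamma-1)\log(1+H/X_T)$. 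Substituting these into \eqref{eq:fbsde3} produces exactly \eqref{eq:powerincomp}, and the candidate strategy $-\frac{U'(X)}{XU''(X)}(Z^i+\theta^i)$ appearing in Theorem~\ref{th:conversehalfline} becomes $\frac{1}{1-\gamma}(Z^i+\theta^i)$, i.e. \eqref{eq:optipower}. Observe also that the coefficient of relative risk aversion $-xU''(x)/U'(x)=1-\gamma$ is constant, and that (H3) and (H5) hold with constants independent of $x$.

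Second, for the existence part I would invoke Theorem~\ref{thm:H2}. Since $H$ is non-negative and bounded and $\limsup_{x\to\infty}\big(-xU''(x)/U'(x)\big)=1-\gamma<\infty$, that theorem provides $x_0>0$ such that for every $x>x_0$ the FBSDE \eqref{eq:fbsde3}, equivalently \eqref{eq:powerincomp}, admits an adapted solution $(X,Y,Z)$ with $X_0=x$, for which $X=X^{\pi^\ast}$ is the optimal wealth of \eqref{eq:optipowerincomstrat}, the dual optimiser is $Y^\ast=U'(X)\exp(Y)$, and $XU'(X)\exp(Y)=XY^\ast$ is a true martingale (it is $\tfrac1y$ times the true martingale $yX^\ast Y^\ast$). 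Moreover $Z^\cO=K^\ast$ is the orthogonal component in the multiplicative representation \eqref{eq:LarsenZitkovic2} of $Y^\ast$, and Lemma~\ref{lemma:Z2_H2}, together with finiteness of the dual value at the relevant $y=u'(x)$, yields $Z^\cO\in\mathbb{H}^2(\real^{d_2})$. This proves the first assertion of the theorem.

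Third, assume in addition that $Z^\cH\in\mathbb{H}^2(\real^{d_1})$. Combined with the previous step, $Z=(Z^\cH,Z^\cO)\in\mathbb{H}^2(\real^d)$, and since $\theta$ is bounded, $\pi^\ast=\frac{1}{1-\gamma}(Z^\cH+\theta^\cH)\in\mathbb{H}^2(\real^{d_1})$, so $\pi^\ast\in\Pi^x$. I would then check the hypotheses of Theorem~\ref{th:conversehalfline}: (H3) and (H5) were recorded above; $(X,Y,Z)$ solves \eqref{eq:fbsde3}; $Z\in\mathbb{H}^2(\real^d)$; the positive local martingale $XU'(X)\exp(Y)$ is a true martingale by the previous step; and $\E[|U(X_T+H)|]=\E[U(X_T+H)]=u(x)<\infty$, because $U\ge0$ on $\real^+$ and $X$ is the optimal wealth. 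Theorem~\ref{th:conversehalfline} then gives that $\pi^{\ast^i}=-\frac{U'(X)}{XU''(X)}(Z^i+\theta^i)=\frac{1}{1-\gamma}(Z^i+\theta^i)$ is an optimal solution of \eqref{eq:optipowerincomstrat}, which is the second assertion.

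The substantive content -- namely the translation of the Kramkov--Schachermayer / Cvitani\'c--Schachermayer--Wang dual optimiser into a solution of \eqref{eq:fbsde3} and the true-martingale property of $X^{\pi^\ast}Y^\ast$ -- is already packaged in Theorem~\ref{thm:H2}, so the argument above is essentially bookkeeping plus the algebraic reduction of the first step. The one point that cannot be settled within this scheme is the hypothesis $Z^\cH\in\mathbb{H}^2(\real^{d_1})$: it is equivalent to square-integrability of the duality-optimal strategy, which for a general (non-hedgeable) endowment $H$ is not delivered by the duality theory and must therefore be imposed as an assumption. Removing it -- e.g.\ by establishing BMO or reverse-H\"older estimates on the backward component under a structural restriction on $H$ -- would be the genuine difficulty.
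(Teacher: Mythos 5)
Your proposal is correct and follows the paper's own route: recognise \eqref{eq:powerincomp} as \eqref{eq:fbsde3} specialised to $U(x)=x^\gamma/\gamma$, obtain existence from Theorem~\ref{thm:H2}, and use the assumed $Z^\cH\in\mathbb{H}^2(\real^{d_1})$ together with Lemma~\ref{lemma:Z2_H2} to get $Z\in\mathbb{H}^2(\real^d)$ and hence admissibility of $\pi^\ast$. The only (harmless) difference is that you conclude optimality by verifying the hypotheses of Theorem~\ref{th:conversehalfline}, whereas the paper inherits optimality directly from the identification $X=X^\ast$ in Theorem~\ref{thm:H2} and only checks $\pi^\ast\in\Pi^x$; your closing observation that the hypothesis $Z^\cH\in\mathbb{H}^2(\real^{d_1})$ is the genuinely open point matches the paper's own concluding remark.
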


\begin{proof}
First note that the system \eqref{eq:powerincomp} is exactly the system \eqref{eq:fbsde3} with $U(x)=\frac{x^\gamma}{\gamma}$. Hence from Theorem \ref{thm:H2} there exists $x_0>0$ such that the system \eqref{eq:powerincomp} admits a solution $(X,Y,Z)$ when $x>x_0$. We fix, $x>x_0$ and consider the associated solution $(X,Y,Z)$ (that is $X_0=x$). In addition, we know from Theorem \ref{thm:H2} that $X=X^\ast$. Hence $\pi^\ast$ is given by \eqref{eq:optipower}. It just remains to prove that $\pi^\ast$ is in $\Pi^x$, which is a direct consequence of the fact that $Z$ is in $\mathbb{H}^2(\real^d)$.
\end{proof}

\begin{remark}
Note that since we know that the dual optimizer $Y^\ast$ is given by $Y^\ast=U'(X) \exp(Y)$ it is clear that $X U'(X) \exp(Y)$ is a true martingale. Hence the square integrability of $Z$ implies the condition of Theorem \ref{th:conversehalfline}: $\E[(X_T+H)^\gamma]<\infty$. Finally notice that $Z^\cO$ is in $\mathbb{H}^2(\real^{d_2})$ by Lemma \ref{lemma:Z2_H2}.
\end{remark}

So the only element missing in the proof is indeed to show that $Z^\cH$ is in $\mathbb{H}^2(\real^{d_1})$ (naturally, since the process $\pi^\ast$ is integrable with respect to $S^{\cH}$ and so it is in $\mathbb{H}^2(\real^{d_1})$). This question requires a deeper analysis of the system and is currently investigated by the authors.

\section*{Acknowledgments}
Horst acknowledges financial support through the SFB 649 \emph{Economic Risk}. Imkeller and R\'eveillac are grateful to the \emph{DFG Research Center MATHEON, Project E2}.
Hu is partially supported by the Marie Curie ITN Project \emph{Deterministic and Stochastic Controlled Systems and Applications}, call: F97-PEOPLE-2007-1-1-ITN, n. 213841-2. Zhang acknowledges support by \emph{DFG IRTG 1339 SMCP}.

%%%%%%%%%%%%%%%%%%%%%%%%%%%%%%%%%%
%%%%%%%%% bibliography %%%%%%%%%%%
%%%%%%%%%%%%%%%%%%%%%%%%%%%%%%%%%%

\end{document}